\newtheorem{thm}{Theorem}[section]
\newtheorem{cor}[thm]{Corollary}
\newtheorem{lem}[thm]{Lemma}
\newtheorem{exm}[thm]{Example}
\newtheorem{prop}[thm]{Proposition}
\theoremstyle{definition}
\newtheorem{defn}[thm]{Definition}
\theoremstyle{remark}
\newtheorem{rem}[thm]{\bf Remark}
\numberwithin{equation}{section}
\begin{document}
\title[The Grothendieck group of a triangulated category]{The Grothendieck group of a triangulated category}
\author[Chen, Li, Zhang, Zhao] {Xiao-Wu Chen, Zhi-Wei Li$^*$, Xiaojin Zhang, Zhibing Zhao}

\makeatletter
\@namedef{subjclassname@2020}{\textup{2020} Mathematics Subject Classification}
\makeatother

\thanks{$^*$ The corresponding author}
\date{\today}
\subjclass[2020]{18G80, 18F30}

\thanks{xwchen$\symbol{64}$mail.ustc.edu.cn, zhiweili@jsnu.edu.cn, xjzhang@jsnu.edu.cn, zbzhao@ahu.edu.cn}
\keywords{Grothendieck group, triangulated category, silting subcategory, weight structure}

\maketitle

\dedicatory{}%
\commby{}%

\begin{abstract}
We give a direct proof of the following known result: the Grothendieck group of a triangulated category with a silting subcategory is isomorphic to the split Grothendieck group of the silting subcategory. Moreover, we obtain its cluster-tilting analogue.
\end{abstract}

\section{Introduction}

Let $\mathcal{T}$ be a skeletally small triangulated category. Denote by $\Sigma$ its suspension functor. Recall from \cite{AI} that a full additive subcategory $\mathcal{M}$ of $\mathcal{T}$ is \emph{presilting} if ${\rm Hom}_\mathcal{T}(\mathcal{M}, \Sigma^i\mathcal{M})=0$ for any $i\geq 1$, or equivalently, ${\rm Hom}_\mathcal{T}(M, \Sigma^i(M'))=0$ for any $M, M'\in \mathcal{M}$ and $i\geq 1$. It is called \emph{silting}, if in addition $\mathcal{T}={\rm tri}\langle \mathcal{M}\rangle$, that is, the smallest triangulated subcategory of $\mathcal{T}$ containing $\mathcal{M}$ coincides with $\mathcal{T}$ itself; compare \cite{KV}.

The definition given here is slightly different from \cite[Definition~2.1]{AI}, since we do not require that $\mathcal{M}$ is closed under direct summands. The main example in mind is the bounded  homotopy category $\mathbf{K}^b(\mathcal{A})$ of a skeletally small additive category $\mathcal{A}$.  It is clear that $\mathcal{A}$ is a silting subcategory of $\mathbf{K}^b(\mathcal{A})$, which  is  not necessarily closed under direct summands in general; see Lemma~\ref{lem:wis}.

The Grothendieck group of $\mathcal{T}$ is denoted by $K_0(\mathcal{T})$. For a skeletally small additive category $\mathcal{A}$,  we denote by $K_0^{\rm sp}(\mathcal{A})$ its split Grothendieck group.

The goal of this work is to give a direct proof of the following result; see Theorem~\ref{thm:main}.
\vskip 5pt

\noindent{\bf Theorem A.}\; \emph{Let $\mathcal{M}$ be a silting subcategory of $\mathcal{T}$. Then the inclusion $\mathcal{M}\hookrightarrow \mathcal{T}$ induces an isomorphism $K_0^{\rm sp}(\mathcal{M})\simeq K_0(\mathcal{T})$ of abelian groups.}

\vskip 5pt

Theorem~A is essentially due to \cite[Theorem~5.3.1]{Bond}, which is formulated using a weight structure and whose indirect proof relies on the weight complex functor. Under the additional Krull-Schmidt assumption on $\mathcal{T}$, Theorem~A is proved in \cite[Theorem~2.27]{AI}. We mention that \cite[Theorem~2.27]{AI} plays a  fundamental role in the study of K-theoretical aspects of  silting theory.

The surjectivity of the induced homomorphism $K_0^{\rm sp}(\mathcal{M})\rightarrow K_0(\mathcal{T})$ above is immediate, but the injectivity is somehow nontrivial. For this,  we establish the inverse homomorphism, whose argument modifies the one in \cite{AI} and  relies on the octahedral axiom (TR4).

Theorem~\ref{thm:main} contains slightly more information than Theorem~A, since the Grothendieck group of an intermediate subcategory is also studied. Moreover, we obtain a cluster-tilting  analogue of Theorem~A in Corollary~\ref{cor:cluster-tilting}, which describes the Grothendieck group of $\mathcal{T}$ as an explicit quotient group of the split Grothendieck group of a cluster-tilting subcategory. We  mention related work  \cite{Palu, Fed, JS, PPPP} on comparing the Grothendieck groups of triangulated categories and those of certain subcategories.

 Theorem~A has the following immediate  consequence \cite[Theorem~1.1]{Rose}, which seems to be well known to  experts  and is very related to \cite[Introduction, the fourth paragraph]{Sch} and \cite[Subsection~3.2.1, Lemma~3]{GS}.

\vskip 5pt
\noindent {\bf Corollary~B.}\; \emph{The inclusion $\mathcal{A}\hookrightarrow \mathbf{K}^b(\mathcal{A})$ induces an isomorphism $K_0^{\rm sp}(\mathcal{A})\simeq K_0(\mathbf{K}^b(\mathcal{A}))$ of abelian groups.}
\vskip 5pt

The paper is structured as follows. In Section~2, we study filtrations of objects with respect to a presilting subcategory. We prove Theorem~A in Section~3. In the final section, we study cluster-tilting analogues of the results in Section~3.

We refer to \cite{Hap, BBD} for triangulated categories and to \cite{Swan}  for Grothendieck groups. All subcategories are required to be full and additive.

\section{Filtrations}

Throughout this section, we fix  a triangulated category $\mathcal{T}$. We assume that  $\mathcal{M}\subseteq \mathcal{T}$ is a skeletally small additive subcategory, which is presilting.
 We  study filtrations on objects, which is  the key ingredient of the proof in the next section.

For two subcategories $\mathcal{X}$ and $\mathcal{Y}$, we have the following subcategory
$$\mathcal{X}*\mathcal{Y}=\{E\in \mathcal{T}\;|\;\exists  \mbox{ an exact triangle } X\rightarrow E\rightarrow Y\rightarrow \Sigma(X) \mbox{ with } X\in \mathcal{X}, Y\in \mathcal{Y}\}.$$
The operation $*$ on subcategories is associative; see \cite[Lemme~1.3.10]{BBD}.

\begin{lem}\label{lem:1}
		The following statements hold.
		\begin{enumerate}
			\item $\Sigma^{i}\mathcal{M}*\Sigma^j\mathcal{M}\subseteq \Sigma^{j}\mathcal{M}*\Sigma^i\mathcal{M}$ for $j< i$, and $\Sigma^{i}\mathcal{M}*\Sigma^i\mathcal{M}=\Sigma^i\mathcal{M}$.
			\item ${\rm Hom}_\mathcal{T}(\Sigma^{-n}\mathcal{M}*\Sigma^{-(n-1)}\mathcal{M}*\cdots *\Sigma^{-1}\mathcal{M}, \Sigma^m\mathcal{M})=0$  if $0\leq m$ and $1\leq n$.
		\end{enumerate}
		
		\begin{proof}
			For (1), we consider an exact triangle
			$$\Sigma^{i}(M_1)\longrightarrow E\longrightarrow \Sigma^{j}(M_2)\stackrel{a}\longrightarrow \Sigma^{i+1}(M_1)$$
			with $M_i\in \mathcal{M}$. Since $\mathcal{M}$ is presilting and $j\leq i$, we have $a=0$. It follows that $E\simeq \Sigma^i(M_1)\oplus \Sigma^j(M_2)$, which belongs to $\Sigma^{j}\mathcal{M}*\Sigma^i\mathcal{M}$. If $i=j$, the object $E$ belongs to $\Sigma^i\mathcal{M}$.
			
			For (2), we take $0\leq m$,  and consider the subcategory
			$$\mathcal{S}_m=\{E\in \mathcal{T}\; |\; {\rm Hom}_\mathcal{T}(E, \Sigma^m\mathcal{M})=0 \}.$$
			This subcategory is closed under extensions. Since $\mathcal{M}$ is presilting, $\mathcal{S}_m$ contains $\Sigma^{-n}\mathcal{M}$  for any $1\leq n$. Then we deduce (2).
		\end{proof}
\end{lem}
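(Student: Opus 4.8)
The plan is to reduce both statements to the presilting vanishing ${\rm Hom}_\mathcal{T}(\mathcal{M},\Sigma^i\mathcal{M})=0$ for $i\geq 1$, after shifting the relevant indices by the powers of $\Sigma$ that appear. Both parts are then essentially bookkeeping with the shift identity ${\rm Hom}_\mathcal{T}(\Sigma^aX,\Sigma^bY)\cong{\rm Hom}_\mathcal{T}(X,\Sigma^{b-a}Y)$.

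For (1), I would start from an arbitrary $E\in\Sigma^i\mathcal{M}*\Sigma^j\mathcal{M}$, so that $E$ sits in an exact triangle $\Sigma^i(M_1)\to E\to\Sigma^j(M_2)\xrightarrow{a}\Sigma^{i+1}(M_1)$ with $M_1,M_2\in\mathcal{M}$. The connecting morphism $a$ lies in ${\rm Hom}_\mathcal{T}(\Sigma^j(M_2),\Sigma^{i+1}(M_1))\cong{\rm Hom}_\mathcal{T}(M_2,\Sigma^{i+1-j}(M_1))$, and since $j\leq i$ forces $i+1-j\geq 1$, the presilting condition makes this group vanish; hence $a=0$ and the triangle splits, giving $E\simeq\Sigma^i(M_1)\oplus\Sigma^j(M_2)$. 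The point is then to re-read this split object: the split triangle $\Sigma^j(M_2)\to E\to\Sigma^i(M_1)\to\Sigma^{j+1}(M_2)$ exhibits $E\in\Sigma^j\mathcal{M}*\Sigma^i\mathcal{M}$, which is the desired inclusion. When $i=j$ the same computation shows the object is a direct sum of two objects of $\Sigma^i\mathcal{M}$, hence lies in $\Sigma^i\mathcal{M}$ by additivity; the reverse containment $\Sigma^i\mathcal{M}\subseteq\Sigma^i\mathcal{M}*\Sigma^i\mathcal{M}$ is immediate by taking a zero summand.

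For (2), I would introduce the right-orthogonal subcategory $\mathcal{S}_m=\{E\in\mathcal{T}\mid{\rm Hom}_\mathcal{T}(E,\Sigma^m\mathcal{M})=0\}$ and argue that it absorbs the whole iterated product. First, $\mathcal{S}_m$ is closed under extensions: applying ${\rm Hom}_\mathcal{T}(-,\Sigma^m M)$ to a triangle $X\to E\to Y\to\Sigma X$ yields a long exact sequence in which the middle term ${\rm Hom}_\mathcal{T}(E,\Sigma^m M)$ is squeezed between the two vanishing terms coming from $X,Y\in\mathcal{S}_m$. Second, each factor $\Sigma^{-k}\mathcal{M}$ with $1\leq k\leq n$ lies in $\mathcal{S}_m$, since ${\rm Hom}_\mathcal{T}(\Sigma^{-k}M',\Sigma^m M)\cong{\rm Hom}_\mathcal{T}(M',\Sigma^{m+k}M)$ and $m+k\geq 1$ by $m\geq 0$ and $k\geq 1$. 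Because $*$ is associative, an easy induction on $n$ shows that an extension-closed subcategory containing each $\Sigma^{-k}\mathcal{M}$ contains the product $\Sigma^{-n}\mathcal{M}*\cdots*\Sigma^{-1}\mathcal{M}$; hence this product lies in $\mathcal{S}_m$, which is exactly the asserted vanishing.

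I do not expect a genuine obstacle here: the only subtlety is the correct index arithmetic, namely checking that the shifts push every relevant Hom into the range $\Sigma^{\geq 1}$ where presilting applies, together with the small observation in (1) that a split triangle can be read in either order. The argument never needs the full octahedral axiom, only the rotation of triangles and the long exact Hom sequence.
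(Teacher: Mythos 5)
Your proof is correct and follows essentially the same route as the paper: splitting the triangle in (1) via the presilting vanishing of the connecting map, and in (2) using the extension-closed right-orthogonal $\mathcal{S}_m$ together with associativity of $*$. The only difference is that you spell out the index arithmetic and the reverse containment $\Sigma^i\mathcal{M}\subseteq\Sigma^i\mathcal{M}*\Sigma^i\mathcal{M}$, which the paper leaves implicit.
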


\begin{defn}
Let $X$ be an object in $\mathcal{T}$. A \emph{$\Sigma^{\leq 0}(\mathcal{M})$-filtration} of \emph{length} $n\geq 1$ for $X$ means a sequence of morphisms
$$0=X_n\longrightarrow X_{n-1}\longrightarrow \cdots \longrightarrow X_1\longrightarrow X_0=X$$
such that each morphism fits into an exact triangle
$$X_{i+1}\longrightarrow X_i\longrightarrow \Sigma^{-i}(M^X_i)\longrightarrow \Sigma(X_{i+1})$$
with the $i$-th \emph{factors} $M_i^X\in \mathcal{M}$ for each $0\leq i\leq n-1$.
\end{defn}

We denote by $\mathcal{F}$ the full subcategory of $\mathcal{T}$ formed by those objects, which  admit a $\Sigma^{\leq 0}(\mathcal{M})$-filtration.

\begin{rem}\label{rem:1}
(1) In the filtration above,  each $X_i$ belongs to
$$\Sigma^{-(n-1)}\mathcal{M}*\cdots *\Sigma^{-(i+1)}\mathcal{M}*\Sigma^{-i}\mathcal{M}.$$
Consequently,   by Lemma~\ref{lem:1}(2) we have
$${\rm Hom}_\mathcal{T}(X, \Sigma(M))=0={\rm Hom}_\mathcal{T}(X_1, M)$$
for any  $M\in \mathcal{M}$.

(2) We observe that
  $$\mathcal{F}=\bigcup_{n\geq 0} \Sigma^{-n}\mathcal{M}*\cdots *\Sigma^{-1}\mathcal{M}*\mathcal{M}.$$
  By applying Lemma~\ref{lem:1}(1) repeatedly, we infer that $\mathcal{F}$ is closed under extensions.
\end{rem}

Let $\mathcal{A}$ be a skeletally small additive category. For each object $A$, the corresponding element in the split Grothendieck group $K_0^{\rm sp}(\mathcal{A})$ is denoted by $\langle A\rangle$. Therefore, we have $\langle A\oplus B\rangle =\langle A\rangle+\langle B\rangle$.

Assume that there are two  $\Sigma^{\leq 0}(\mathcal{M})$-filtrations of $X$:
\begin{align}\label{filt1}
0=X_n\longrightarrow X_{n-1}\longrightarrow \cdots \longrightarrow X_1\longrightarrow X_0=X
\end{align}
and
\begin{align}\label{filt2}
0=Y_m\longrightarrow Y_{m-1}\longrightarrow \cdots \longrightarrow Y_1\longrightarrow Y_0=X
\end{align}
with factors $M_i^X$ and $M_j^Y$. The two filtrations are said to be \emph{equivalent} if
$$\sum_{i=0}^{n-1} (-1)^i \langle M^X_i\rangle=\sum_{j=0}^{m-1} (-1)^j \langle M^Y_j\rangle$$
holds in $K_0^{\rm sp}(\mathcal{M})$.

The argument in the following proof resembles the one in proving the Jordan-H{\"o}lder theorem for modules of finite length. It releases the restriction of the existence of minimal morphisms, which is needed in the proof of \cite[Theorem~2.27]{AI}; compare \cite[Remark~5.3]{Fed}.

\begin{prop}\label{prop:filt}
    Any two $\Sigma^{\leq 0}(\mathcal{M})$-filtrations of an object $X$  are equivalent.
\end{prop}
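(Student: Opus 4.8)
The plan is to argue by strong induction on the sum $n+m$ of the lengths of the two given filtrations \eqref{filt1} and \eqref{filt2}, reducing each comparison to a comparison of strictly shorter filtrations of a single auxiliary object. Write $S_X=\sum_{i=0}^{n-1}(-1)^i\langle M_i^X\rangle$ and $S_Y=\sum_{j=0}^{m-1}(-1)^j\langle M_j^Y\rangle$; the goal is to show $S_X=S_Y$.

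First I would peel off the top factors. Applying $\Sigma$ to the tail $0=X_n\to\cdots\to X_1$ of \eqref{filt1} turns it into a $\Sigma^{\leq 0}(\mathcal{M})$-filtration of $\Sigma X_1$ of length $n-1$ with factors $M_1^X,\dots,M_{n-1}^X$; hence $S_X=\langle M_0^X\rangle-S_{X_1}$, where $S_{X_1}$ is the alternating sum of this induced filtration of $\Sigma X_1$. Symmetrically $S_Y=\langle M_0^Y\rangle-S_{Y_1}$ for the induced length-$(m-1)$ filtration of $\Sigma Y_1$. Thus it suffices to prove $\langle M_0^Y\rangle + S_{X_1} = \langle M_0^X\rangle + S_{Y_1}$.

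The key step, and the place where (TR4) enters, is to compare the two ``first subobjects'' $X_1$ and $Y_1$ through a homotopy pullback. Let $\alpha\colon X_1\to X$ and $\beta\colon Y_1\to X$ be the first morphisms of the two filtrations, whose cones are $M_0^X$ and $M_0^Y$ respectively. I would form the homotopy pullback $P$ of $\alpha$ and $\beta$; since the resulting square is homotopy cartesian, the octahedral axiom supplies exact triangles
$$P\longrightarrow X_1\longrightarrow M_0^Y\longrightarrow \Sigma P \quad\text{and}\quad P\longrightarrow Y_1\longrightarrow M_0^X\longrightarrow \Sigma P,$$
because the cone of $P\to X_1$ is the cone of $\beta$ and the cone of $P\to Y_1$ is the cone of $\alpha$. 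Now Remark~\ref{rem:1}(1) applies: ${\rm Hom}_\mathcal{T}(X_1,M_0^Y)=0$ and ${\rm Hom}_\mathcal{T}(Y_1,M_0^X)=0$, so the two middle morphisms vanish and both triangles split. This gives $X_1\oplus\Sigma^{-1}M_0^Y\cong P\cong Y_1\oplus\Sigma^{-1}M_0^X$, and after applying $\Sigma$,
$$W:=\Sigma X_1\oplus M_0^Y\;\cong\;\Sigma Y_1\oplus M_0^X.$$

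Finally I would read off the conclusion from two natural filtrations of $W$. The degreewise direct sum of the induced length-$(n-1)$ filtration of $\Sigma X_1$ with the length-$1$ filtration of $M_0^Y$ is a filtration of $W$ with alternating sum $S_{X_1}+\langle M_0^Y\rangle$; likewise the other decomposition gives a filtration of $W$ with alternating sum $S_{Y_1}+\langle M_0^X\rangle$. These two filtrations of the single object $W$ have total length $(n-1)+(m-1)<n+m$, so by the induction hypothesis they are equivalent, i.e. $S_{X_1}+\langle M_0^Y\rangle=S_{Y_1}+\langle M_0^X\rangle$, which is exactly what remained; the base case $n=m=1$ (where $X\cong M_0^X\cong M_0^Y$) is immediate. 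The main obstacle is precisely the comparison of the non-canonical top factors $M_0^X$ and $M_0^Y$: in the Krull--Schmidt setting of \cite{AI} one invokes minimal approximations, whereas here that is bypassed by the splitting of the homotopy pullback, so the entire force of the argument rests on the cone identifications for the homotopy cartesian square (TR4) together with the vanishing in Remark~\ref{rem:1}(1).
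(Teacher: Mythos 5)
Your proof is correct and follows essentially the same route as the paper: your object $W=\Sigma P$ is exactly the auxiliary object $Z$ that the paper produces by applying (TR4) to the composite $Y_1\to X\to M_0^X$, the splitting $\Sigma X_1\oplus M_0^Y\cong\Sigma Y_1\oplus M_0^X$ via the vanishing in Remark~\ref{rem:1}(1) is identical, and your final comparison of the two direct-sum filtrations of $W$ by induction on $n+m$ is the same reduction the paper performs by induction on the common length after padding with zeros. The only differences are cosmetic (homotopy pullback packaging of the octahedron, and inducting on $n+m$ rather than normalizing to $n=m$).
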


\begin{proof}
We assume that (\ref{filt1}) and (\ref{filt2}) are two given filtrations of $X$ with $n, m\geq 1$. By extending one of the filtrations by zeros, we may assume that they have the same length, that is, $n=m$.  We use induction on the common length $n$. If $n=1$, the statement is trivial, since both $M_0^X$ and $M_0^Y$ are isomorphic to $X$.

We assume that $n\geq 2$. We apply (TR4) to the exact triangles $Y_1\rightarrow X\rightarrow M_0^Y\rightarrow \Sigma(Y_1)$ and $X\rightarrow M_0^X\rightarrow \Sigma(X_1)\rightarrow \Sigma(X)$, and obtain the following commutative diagram.
\[\xymatrix{
Y_1\ar@{=}[d]\ar[r] & X\ar[d] \ar[r] & M_0^Y\ar[d] \ar[r] & \Sigma(Y_1)\ar@{=}[d]\\
Y_1\ar[r]^a & M_0^X \ar[d]\ar[r] & Z\ar[d] \ar[r] & \Sigma(Y_1)\ar[d]\\
& \Sigma(X_1)\ar[d] \ar@{=}[r] & \Sigma(X_1)\ar[d]^-{\Sigma(b)} \ar[r] & \Sigma(X)\\
& \Sigma(X) \ar[r] & \Sigma(M_0^Y)
}\]
By Remark~\ref{rem:1}, we have $a=0=b$. Therefore, we have isomorphisms
$$\Sigma(X_1)\oplus M_0^Y\simeq Z\simeq \Sigma(Y_1)\oplus M_0^X.$$

The exact triangle $X_2\rightarrow X_1\rightarrow \Sigma^{-1}(M_1^X)\rightarrow \Sigma(X_2)$ gives rise to the following one
$$\Sigma(X_2)\longrightarrow Z\longrightarrow M_1^X\oplus M_0^Y\longrightarrow \Sigma^2(X_2).$$
Consequently, we have a $\Sigma^{\leq 0}(\mathcal{M})$-filtration of length $n-1$ for $Z$.
$$0=\Sigma(X_n)\longrightarrow \Sigma(X_{n-1})\longrightarrow \cdots \longrightarrow \Sigma(X_2)\longrightarrow Z$$
Its factors are given by $\{M_1^X\oplus M_0^Y, M_2^X, \cdots, M_{n-1}^X\}$. Similarly, we have another filtration of length $n-1$
$$0=\Sigma(Y_n)\longrightarrow \Sigma(Y_{n-1})\longrightarrow \cdots \longrightarrow \Sigma(Y_2)\longrightarrow Z$$
with factors $\{M_1^Y\oplus M_0^X, M_2^Y, \cdots, M_{n-1}^Y\}$. Now by induction, these two filtrations for $Z$ are equivalent, that is, we have
$$\langle M_1^X\oplus M_0^Y\rangle +\sum_{i=2}^{n-1} (-1)^{i-1}\langle M_i^X\rangle=\langle M_1^Y\oplus M_0^X\rangle +\sum_{j=2}^{n-1} (-1)^{j-1}\langle M_j^Y\rangle.$$
This implies that $\sum_{i=0}^{n-1} (-1)^i \langle M^X_i\rangle=\sum_{j=0}^{m-1} (-1)^j \langle M^Y_j\rangle$, as required.
\end{proof}

The following result is analogous to the horseshoe lemma.

\begin{lem}\label{lem:HL}
     Let  $X\stackrel{a}\rightarrow Y \stackrel{b}\rightarrow Z\stackrel{c}\rightarrow \Sigma(X)$ be an exact triangle with $X, Z\in \mathcal{F}$, and assume that $n\geq 1$.  If
     $$0=X_n\longrightarrow X_{n-1}\longrightarrow \cdots \longrightarrow X_1\longrightarrow X_0=X
$$
and
$$0=Z_n\longrightarrow Z_{n-1}\longrightarrow \cdots \longrightarrow Z_1\longrightarrow Z_0=Z
$$
are $\Sigma^{\leq 0}(\mathcal{M})$-filtrations of $X$ and $Z$, respectively, then $Y$ has a $\Sigma^{\leq 0}(\mathcal{M})$-filtration
$$0=Y_n\longrightarrow Y_{n-1}\longrightarrow \cdots \longrightarrow Y_1\longrightarrow Y_0=Y
$$
with its factors $M_i^Y\simeq M_i^X\oplus M_i^Z$ for $0\leq i\leq n-1$.
\end{lem}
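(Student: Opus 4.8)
The plan is to argue by induction on the length $n$, in direct imitation of the classical horseshoe lemma. For the base case $n=1$, both filtrations are trivial, so $X\simeq M_0^X$ and $Z\simeq M_0^Z$. Since $\mathcal{M}$ is presilting, ${\rm Hom}_\mathcal{T}(Z,\Sigma(X))\simeq {\rm Hom}_\mathcal{T}(M_0^Z,\Sigma(M_0^X))=0$, so the connecting morphism $c$ vanishes and the triangle splits, giving $Y\simeq M_0^X\oplus M_0^Z$; this is a length-one filtration of $Y$ with factor $M_0^Y\simeq M_0^X\oplus M_0^Z$.

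For the inductive step, write $p_X\colon X\to M_0^X$ and $p_Z\colon Z\to M_0^Z$ for the degree-zero morphisms coming from the triangles $X_1\to X\xrightarrow{p_X} M_0^X\to \Sigma(X_1)$ and $Z_1\to Z\xrightarrow{p_Z} M_0^Z\to \Sigma(Z_1)$. First I would assemble a morphism $q\colon Y\to M_0^X\oplus M_0^Z$. Its second component is taken to be $p_Z\circ b$, which annihilates $a$ because $b\circ a=0$. For its first component I need a lift $q_1\colon Y\to M_0^X$ of $p_X$ along $a$; applying ${\rm Hom}_\mathcal{T}(-,M_0^X)$ to the given triangle, the obstruction to such a lift lies in ${\rm Hom}_\mathcal{T}(\Sigma^{-1}(Z),M_0^X)\simeq {\rm Hom}_\mathcal{T}(Z,\Sigma(M_0^X))$, which vanishes by Remark~\ref{rem:1}(1) since $Z\in\mathcal{F}$. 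With $q=\begin{pmatrix} q_1\\ p_Z b\end{pmatrix}$, the triple $(p_X,q,p_Z)$ becomes a morphism from the triangle $X\to Y\to Z\to \Sigma(X)$ to the split triangle $M_0^X\to M_0^X\oplus M_0^Z\to M_0^Z\to \Sigma(M_0^X)$: the two inner squares commute by construction, while the remaining square commutes automatically because ${\rm Hom}_\mathcal{T}(Z,\Sigma(M_0^X))=0$ forces $\Sigma(p_X)\circ c=0$.

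I would then feed this morphism of triangles into the $3\times 3$ lemma \cite[Proposition~1.1.11]{BBD}, which is a consequence of (TR4). The cones of $p_X$, $q$, $p_Z$ are $\Sigma(X_1)$, $\Sigma(Y_1)$, $\Sigma(Z_1)$ respectively (this defines $Y_1$), and they fit into an exact triangle $\Sigma(X_1)\to \Sigma(Y_1)\to \Sigma(Z_1)\to \Sigma^2(X_1)$. Applying $\Sigma$ to the tails of the given filtrations shows that $\Sigma(X_1)$ and $\Sigma(Z_1)$ carry $\Sigma^{\leq 0}(\mathcal{M})$-filtrations of length $n-1$ with (re-indexed) factors $M_i^X$ and $M_i^Z$ for $1\le i\le n-1$. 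By the inductive hypothesis applied to the displayed triangle, $\Sigma(Y_1)$ acquires a length $n-1$ filtration with factors $M_i^X\oplus M_i^Z$; de-suspending yields the desired filtration of $Y_1$, and prepending the triangle $Y_1\to Y\xrightarrow{q} M_0^X\oplus M_0^Z\to \Sigma(Y_1)$ gives the length-$n$ filtration of $Y$ with $M_i^Y\simeq M_i^X\oplus M_i^Z$. I expect the main obstacle to be the bookkeeping rather than any new idea: namely, verifying that the cone of $q$ fits the $\Sigma^{\leq 0}(\mathcal{M})$-pattern after the suspension and de-suspension, and confirming that the hypotheses of the $3\times 3$ lemma are genuinely met, above all that $(p_X,q,p_Z)$ is an honest morphism of triangles.
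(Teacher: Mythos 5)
Your overall strategy is the paper's: induct on the filtration length, use the vanishing $\mathrm{Hom}_{\mathcal{T}}(Z,\Sigma(M_0^X))=0$ from Remark~\ref{rem:1}(1), invoke the $3\times 3$ lemma of \cite[Proposition~1.1.11]{BBD} to produce the triangle $X_1\to Y_1\to Z_1\to\Sigma(X_1)$ together with $M_0^Y\simeq M_0^X\oplus M_0^Z$, and iterate. The base case, the construction of the lift $q_1$ with $q_1\circ a=p_X$, and the verification that $(p_X,q,p_Z)$ is a morphism of triangles are all correct.

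There is, however, one step that as written appeals to a false principle, and it is not the step you single out as the main obstacle. Knowing that $(p_X,q,p_Z)$ is ``an honest morphism of triangles'' does \emph{not} imply that the cones of $p_X$, $q$, $p_Z$ fit into an exact triangle: cones are not functorial, and an arbitrary morphism of triangles need not be ``good'' in this sense. What \cite[Proposition~1.1.11]{BBD} actually provides is this: starting from a \emph{single} commutative square, it completes the two given arrows to triangles and manufactures the third vertical arrow together with a triangle on the three cones. If you feed it the square built from $a$ and the inclusion $\iota_1\colon M_0^X\to M_0^X\oplus M_0^Z$ (vertical maps $p_X$ and $q$), the third vertical map it returns is some $\tilde p\colon Z\to M_0^Z$ constrained only by $\tilde p\circ b=p_Z\circ b$; since $\mathrm{Hom}_{\mathcal{T}}(\Sigma(X),M_0^Z)$ need not vanish, $\tilde p$ may differ from $p_Z$ by a map factoring through $c$, and its cone need not be $\Sigma(Z_1)$ — so the inductive hypothesis would not apply. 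The repair is easy but must be made: either feed the lemma the \emph{other} inner square, built from $b$ and the projection $\pi_2\colon M_0^X\oplus M_0^Z\to M_0^Z$ (vertical maps $q$ and $p_Z$) — the first vertical map $\tilde p_X$ it then returns satisfies $\iota_1\circ\tilde p_X=q\circ a=\iota_1\circ p_X$, hence $\tilde p_X=p_X$ because $\iota_1$ is a split monomorphism, so all three cones are the intended ones — or, as the paper does, skip the construction of $q$ altogether and start the $3\times 3$ lemma from the square with horizontal arrows $c$ and $0\colon M_0^Z\to\Sigma(M_0^X)$ and vertical arrows $p_Z$ and $\Sigma(p_X)$, which commutes trivially because $\mathrm{Hom}_{\mathcal{T}}(Z,\Sigma(M_0^X))=0$; the lemma then produces $q$ and $Y_1$ for you, with both outer columns prescribed from the start.
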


\begin{proof}
By Remark~\ref{rem:1}, the following square trivially commutes.
\[\xymatrix{
Z\ar[d] \ar[r]^-c & \Sigma(X)\ar[d]\\
M_0^Z\ar[r]^-0 & \Sigma(M_0^X)
}\]
Applying the $3\times 3$ Lemma in \cite[Proposition~1.1.11]{BBD} and rotations, we have the following commutative diagram with exact columns and rows.
\[\xymatrix{
\Sigma^{-1}(M_0^X)\ar[d] \ar[r] & \Sigma^{-1}(M_0^X\oplus M_0^Z) \ar[d]\ar[r] & \Sigma^{-1}(M_0^Z) \ar[d] \ar[r] & M_0^X\ar[d]\\
X_1 \ar[d]\ar[r]^{a_1} & Y_1 \ar[d] \ar[r]^{b_1} & Z_1 \ar[d]\ar[r]^{c_1} & \ar[d] \Sigma(X_1)\\
X \ar[d] \ar[r]^{a} & Y \ar[d] \ar[r]^{b} & Z\ar[r]^{c} \ar[d] & \Sigma(X) \ar[d]\\
M_0^X \ar[r] & M_0^X\oplus M_0^Z \ar[r] & M_0^Z \ar[r]^0 & \Sigma(M_0^X)
}\]
The middle vertical triangle
$$\Sigma^{-1}(M_0^X\oplus M_Z^0)\longrightarrow Y_1\longrightarrow Y\longrightarrow M_0^X\oplus M_0^Z$$ implies that $M_0^Y\simeq M_0^X\oplus M_0^Z$. We now repeat the argument to the exact triangle $X_1\stackrel{a_1}\rightarrow Y_1 \stackrel{b_1}\rightarrow Z_1\stackrel{c_1}\rightarrow \Sigma(X_1)$. Then we obtain the required filtration for $Y$.
\end{proof}

\section{The proof of Theorem~A}
In this section, we give the proof of Theorem~A and describe the original version \cite{Bond} of Theorem~A in terms of bounded weight structures. We fix a skeletally small triangulated category $\mathcal{T}$.

Let $\mathcal{C}$ be a full additive subcategory of $\mathcal{T}$. We define its \emph{Grothendieck group} $K_0(\mathcal{C})$ to be the abelian group generated by $\{[C]\; | \; C\in \mathcal{C}\}$ subject to the relations $[C]-([C_1]+[C_2])$ whenever there is an exact triangle $C_1\rightarrow C\rightarrow C_2\rightarrow \Sigma(C_1)$ in $\mathcal{T}$ with $C_i, C\in \mathcal{C}$. We emphasize that $K_0(\mathcal{C})$ depends on the inclusion $\mathcal{C}\hookrightarrow \mathcal{T}$.

The following result indicates that the Grothendieck group $K_0(\mathcal{C})$ of a certain subcategory $\mathcal{C}$ might be  useful.

\begin{lem}\label{lem:Sigma}
    Assume that the full subcategory $\mathcal{C}$ is closed under $\Sigma^{-1}$ and that for any object $X\in \mathcal{T}$ there exists a natural number $n$ such that $\Sigma^{-n}(X)\in \mathcal{C}$. Then the inclusion $\mathcal{C}\hookrightarrow \mathcal{T}$ induces an isomorphism $K_0(\mathcal{C})\simeq K_0(\mathcal{T})$.
\end{lem}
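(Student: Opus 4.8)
The plan is to show that the evident induced homomorphism $\iota\colon K_0(\mathcal{C})\to K_0(\mathcal{T})$, sending the class of $C\in\mathcal{C}$ in $K_0(\mathcal{C})$ to its class in $K_0(\mathcal{T})$, is an isomorphism by exhibiting an explicit inverse. First I would note that $\iota$ is well defined: any exact triangle with all three terms in $\mathcal{C}$ is in particular an exact triangle in $\mathcal{T}$, so the defining relations of $K_0(\mathcal{C})$ are carried to relations in $K_0(\mathcal{T})$.

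The core input is the standard sign relation in the Grothendieck group of a triangulated category: the rotation of the identity triangle $Y\to Y\to 0\to\Sigma(Y)$ yields the exact triangle $Y\to 0\to\Sigma(Y)\to\Sigma(Y)$, whence $[\Sigma(Y)]=-[Y]$. The crucial refinement is that, because $\mathcal{C}$ is closed under $\Sigma^{-1}$ (and contains the zero object, being additive), for any $C\in\mathcal{C}$ the triangle $\Sigma^{-1}(C)\to 0\to C\to C$ has all its terms in $\mathcal{C}$; it is therefore a defining relation of $K_0(\mathcal{C})$ and gives $[\Sigma^{-1}(C)]=-[C]$ already inside $K_0(\mathcal{C})$, not merely in $K_0(\mathcal{T})$.

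To build the inverse I would define a map $\phi$ on objects by $\phi(X)=(-1)^n[\Sigma^{-n}(X)]\in K_0(\mathcal{C})$, where $n$ is chosen, using the second hypothesis, so that $\Sigma^{-n}(X)\in\mathcal{C}$; since $\mathcal{C}$ is closed under $\Sigma^{-1}$, every sufficiently large $n$ works, and the relation of the previous paragraph shows $\phi(X)$ is independent of the choice of $n$, so $\phi$ is well defined and clearly constant on isomorphism classes. The key step is to check that $\phi$ respects the triangle relations of $K_0(\mathcal{T})$: given an exact triangle $X_1\to X\to X_2\to\Sigma(X_1)$, pick a single $n$ large enough that $\Sigma^{-n}$ carries all three objects into $\mathcal{C}$, and apply the triangulated functor $\Sigma^{-n}$ to obtain an exact triangle $\Sigma^{-n}(X_1)\to\Sigma^{-n}(X)\to\Sigma^{-n}(X_2)\to\Sigma(\Sigma^{-n}(X_1))$ lying entirely in $\mathcal{C}$; its defining relation in $K_0(\mathcal{C})$, multiplied by $(-1)^n$, reads exactly $\phi(X)=\phi(X_1)+\phi(X_2)$. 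Hence $\phi$ descends to a homomorphism $\psi\colon K_0(\mathcal{T})\to K_0(\mathcal{C})$.

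Finally I would verify that $\psi$ and $\iota$ are mutually inverse. For $C\in\mathcal{C}$ one may take $n=0$, so $\psi\iota[C]=\phi(C)=[C]$; and for $X\in\mathcal{T}$ the relation $[\Sigma^{-n}(X)]=(-1)^n[X]$ in $K_0(\mathcal{T})$ gives $\iota\psi[X]=(-1)^n[\Sigma^{-n}(X)]=[X]$. The main obstacle, and the only place where both hypotheses are genuinely needed, is establishing that $\phi$ is well defined and relation-respecting \emph{inside} $K_0(\mathcal{C})$: this is precisely what closure of $\mathcal{C}$ under $\Sigma^{-1}$ secures, by keeping the auxiliary triangles $\Sigma^{-1}(C)\to 0\to C\to C$ and $\Sigma^{-n}(X_1)\to\Sigma^{-n}(X)\to\Sigma^{-n}(X_2)\to\Sigma(\Sigma^{-n}(X_1))$ within $\mathcal{C}$, while the second hypothesis guarantees such an $n$ always exists.
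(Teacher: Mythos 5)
Your proposal is correct and follows essentially the same route as the paper: the relation $[\Sigma^{-1}(C)]=-[C]$ inside $K_0(\mathcal{C})$, the map $\phi$ defined by desuspending into $\mathcal{C}$, and the verification that it descends to an inverse of the induced homomorphism. The one point worth noting is that your explicit sign $\phi(X)=(-1)^n[\Sigma^{-n}(X)]$ is the correct normalization: the paper writes $\phi(X)=[\Sigma^{-n}(X)]$, which as literally stated would change sign with the parity of $n$, so your version makes the independence of $n$ (and the identity $\iota\psi=\mathrm{id}$) actually hold.
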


\begin{proof}
    We make an easy observation: for each object $C$ in $\mathcal{C}$, the trivial triangle $\Sigma^{-1}(C)\rightarrow 0\rightarrow C\rightarrow C$ implies that $[\Sigma^{-1}(C)]=-[C]$ in $K_0(\mathcal{C})$. For each object $X$ in $\mathcal{T}$, we choose a natural number $n$ with $\Sigma^{-n}(X)\in \mathcal{C}$, and  define an element $\phi(X)=[\Sigma^{-n}(X)]$ in $K_0(\mathcal{C})$. The observation above implies that $\phi(X)$ does not depend on the choice of $n$. Since any $\Sigma^{-n}$ is a triangle functor, these $\phi(X)$ give rise to a well-defined homomorphism $\Phi\colon K_0(\mathcal{T})\rightarrow K_0(\mathcal{C})$ such that $\Phi([X])=\phi(X)$. It is routine to verify that $\Phi$ is inverse to the induced homomorphism $K_0(\mathcal{C})\rightarrow  K_0(\mathcal{T})$.
\end{proof}

\begin{prop}\label{prop:F}
 Let $\mathcal{M}$ be a presilting subcategory of $\mathcal{T}$. Then the inclusion $\mathcal{M}\hookrightarrow \mathcal{F}$ induces an isomorphism $K_0^{\rm sp}(\mathcal{M})\simeq K_0(\mathcal{F})$ of abelian groups.
\end{prop}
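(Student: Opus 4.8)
The plan is to construct mutually inverse homomorphisms between $K_0^{\rm sp}(\mathcal{M})$ and $K_0(\mathcal{F})$. First I would observe that the inclusion $\mathcal{M}\hookrightarrow\mathcal{F}$ induces a homomorphism $K_0^{\rm sp}(\mathcal{M})\to K_0(\mathcal{F})$ sending $\langle M\rangle\mapsto[M]$; this is well-defined because every split exact sequence in $\mathcal{M}$ yields a (split) exact triangle in $\mathcal{T}$, so the defining relations of $K_0^{\rm sp}(\mathcal{M})$ map to relations in $K_0(\mathcal{F})$. Call this map $\iota$. The real content is to build an inverse $\psi\colon K_0(\mathcal{F})\to K_0^{\rm sp}(\mathcal{M})$.

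To define $\psi$, I would use the theory of filtrations developed above. Given $X\in\mathcal{F}$, choose a $\Sigma^{\le 0}(\mathcal{M})$-filtration with factors $M_i^X$ and set
\[
\psi(X)=\sum_{i=0}^{n-1}(-1)^i\langle M_i^X\rangle\in K_0^{\rm sp}(\mathcal{M}).
\]
Proposition \ref{prop:filt} is exactly what guarantees this is independent of the chosen filtration, so $\psi$ is a well-defined function on objects of $\mathcal{F}$. The key step is then to check that $\psi$ respects the defining relations of $K_0(\mathcal{F})$: for an exact triangle $X\to Y\to Z\to\Sigma(X)$ with $X,Y,Z\in\mathcal{F}$, I must show $\psi(Y)=\psi(X)+\psi(Z)$. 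This is precisely where Lemma \ref{lem:HL} enters: after extending the filtrations of $X$ and $Z$ to a common length, the horseshoe-type lemma produces a filtration of $Y$ whose $i$-th factor is $M_i^X\oplus M_i^Z$. Since $\langle M_i^X\oplus M_i^Z\rangle=\langle M_i^X\rangle+\langle M_i^Z\rangle$ in the split Grothendieck group, summing with signs gives $\psi(Y)=\psi(X)+\psi(Z)$ immediately. Thus $\psi$ descends to a homomorphism $K_0(\mathcal{F})\to K_0^{\rm sp}(\mathcal{M})$.

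Finally I would verify that $\iota$ and $\psi$ are mutually inverse. The composite $\psi\circ\iota$ is the identity on generators: for $M\in\mathcal{M}$, the length-one filtration $0\to M$ gives $\psi(M)=\langle M\rangle$. For the other composite $\iota\circ\psi$, I would argue that for any $X\in\mathcal{F}$ with filtration factors $M_i^X$, the filtration triangles express $[X]$ in $K_0(\mathcal{F})$ as $\sum_{i=0}^{n-1}(-1)^i[M_i^X]$ by repeatedly applying the relation to each triangle $X_{i+1}\to X_i\to\Sigma^{-i}(M_i^X)\to\Sigma(X_{i+1})$ together with the identity $[\Sigma^{-1}(C)]=-[C]$ in $K_0$, so $\iota(\psi(X))=[X]$. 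The main obstacle is conceptual rather than computational: one must be careful that all objects appearing in the inductive unwinding of the filtration genuinely lie in $\mathcal{F}$ (guaranteed by Remark \ref{rem:1}(1)) so that the relations in $K_0(\mathcal{F})$ may legitimately be invoked, and that the sign bookkeeping from the suspensions $\Sigma^{-i}$ matches the alternating sum defining $\psi$.
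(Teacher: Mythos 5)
Your proposal is correct and follows essentially the same route as the paper: define the inverse map on objects by the signed sum of filtration factors, invoke Proposition~\ref{prop:filt} for independence of the chosen filtration and Lemma~\ref{lem:HL} for compatibility with exact triangles, then check the two composites. Your extra care in unwinding $[X]=\sum_{i}(-1)^i[M_i^X]$ inside $K_0(\mathcal{F})$ (using that the $X_i$ and $\Sigma^{-i}(M_i^X)$ all lie in $\mathcal{F}$) is exactly the content the paper dismisses as routine.
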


\begin{proof}
For each $X\in \mathcal{F}$, we choose a $\Sigma^{\leq 0}(\mathcal{M})$-filtration
$$0=X_n\longrightarrow X_{n-1}\longrightarrow \cdots \longrightarrow X_1\longrightarrow X_0=X$$
with $n\geq 1$ and factors $M_i^X$.
We define an element
$$\gamma(X)=\sum_{i=0}^{n-1}(-1)^i\langle M_i^X\rangle$$
in $K_0^{\rm sp}(\mathcal{M})$. By Proposition~\ref{prop:filt}, the element $\gamma(X)$ does not depend on the choice of such a filtration. By Lemma~\ref{lem:HL}, the map $(X\mapsto \gamma(X))$ is compatible with exact triangles in $\mathcal{F}$. Therefore, such a map induces a well-defined homomorphism $\Gamma\colon K_0(\mathcal{F})\rightarrow K_0^{\rm sp}(\mathcal{M})$ such that $\Gamma([X])=\gamma(X)$. It is routine to verify that $\Gamma$ is inverse to the induced homomorphism $K_0^{\rm sp}(\mathcal{M})\rightarrow K_0(\mathcal{F})$.
\end{proof}

The following result contains Theorem~A, which is analogous to \cite[Proposition~4.11]{PPPP} in the setting of extriangulated categories \cite{NP}.

\begin{thm}\label{thm:main}
	 Let $\mathcal{M}$ be a silting subcategory of $\mathcal{T}$. Then the inclusions $\mathcal{M}\hookrightarrow \mathcal{F}\hookrightarrow \mathcal{T}$ induce isomorphisms $K_0^{\rm sp}(\mathcal{M})\simeq K_0(\mathcal{F})\simeq K_0(\mathcal{T})$ of abelian groups.
\end{thm}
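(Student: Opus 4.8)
The plan is to combine the already-proved isomorphism of Proposition~\ref{prop:F} with Lemma~\ref{lem:Sigma}, so the entire argument reduces to verifying that the subcategory $\mathcal{F}$ satisfies the two hypotheses of Lemma~\ref{lem:Sigma}. Proposition~\ref{prop:F} applies verbatim, since a silting subcategory is in particular presilting, and already yields the left-hand isomorphism $K_0^{\rm sp}(\mathcal{M})\simeq K_0(\mathcal{F})$. It therefore remains to produce the right-hand isomorphism $K_0(\mathcal{F})\simeq K_0(\mathcal{T})$ via Lemma~\ref{lem:Sigma}, after which the composite is the desired isomorphism induced by the inclusions.

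First I would check that $\mathcal{F}$ is closed under $\Sigma^{-1}$. Using the description $\mathcal{F}=\bigcup_{n\geq 0}\Sigma^{-n}\mathcal{M}*\cdots*\Sigma^{-1}\mathcal{M}*\mathcal{M}$ from Remark~\ref{rem:1}(2), applying $\Sigma^{-1}$ shifts every factor down by one, landing an object of the $n$-th term inside $\Sigma^{-(n+1)}\mathcal{M}*\cdots*\Sigma^{-1}\mathcal{M}*\mathcal{M}$, which is again in $\mathcal{F}$; concretely, a $\Sigma^{\leq 0}(\mathcal{M})$-filtration of $X$ gives one of $\Sigma^{-1}(X)$ by applying the triangle functor $\Sigma^{-1}$ and prepending a zero step, so the closure under $\Sigma^{-1}$ is immediate. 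Second, I would verify the covering condition: for any $X\in\mathcal{T}$ there is $n$ with $\Sigma^{-n}(X)\in\mathcal{F}$. This is exactly the place where the silting hypothesis $\mathcal{T}={\rm tri}\langle\mathcal{M}\rangle$ is used, as opposed to mere presilting.

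The main obstacle will be this covering condition, and I expect to handle it by a generation argument on the triangulated subcategory generated by $\mathcal{M}$. The set of objects $X$ admitting some $n$ with $\Sigma^{-n}(X)\in\mathcal{F}$ is closed under $\Sigma^{\pm 1}$ (clear from the previous paragraph together with the fact that raising $n$ costs nothing) and under direct summands (since $\mathcal{F}$, being built from $*$-products, is closed under summands by Lemma~\ref{lem:1}(1) applied repeatedly). The crux is closure under extensions: given a triangle $A\to E\to B\to\Sigma(A)$ with suitable shifts of $A$ and $B$ lying in $\mathcal{F}$, one chooses a common large $n$ so that both $\Sigma^{-n}(A)$ and $\Sigma^{-n}(B)$ are in $\mathcal{F}$, and then invokes the horseshoe-type Lemma~\ref{lem:HL} on the shifted triangle $\Sigma^{-n}(A)\to\Sigma^{-n}(E)\to\Sigma^{-n}(B)\to\Sigma^{1-n}(A)$ to conclude $\Sigma^{-n}(E)\in\mathcal{F}$. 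Since this class contains $\mathcal{M}$ and is a triangulated subcategory closed under summands, it contains ${\rm tri}\langle\mathcal{M}\rangle=\mathcal{T}$, giving the covering condition. With both hypotheses of Lemma~\ref{lem:Sigma} verified for $\mathcal{C}=\mathcal{F}$, the isomorphism $K_0(\mathcal{F})\simeq K_0(\mathcal{T})$ follows, and composing with Proposition~\ref{prop:F} completes the proof.
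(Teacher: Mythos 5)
Your proposal is correct and follows the same overall route as the paper: both reduce the theorem to Proposition~\ref{prop:F} combined with Lemma~\ref{lem:Sigma}, so everything comes down to checking the two hypotheses of Lemma~\ref{lem:Sigma} for $\mathcal{C}=\mathcal{F}$. The only real divergence is in the covering condition. The paper argues directly: since $\mathcal{T}={\rm tri}\langle\mathcal{M}\rangle$, every object lies in some finite product $\Sigma^{i_1}\mathcal{M}*\cdots*\Sigma^{i_n}\mathcal{M}$, and Lemma~\ref{lem:1}(1) lets one reorder the shifts increasingly, so that a sufficiently large negative shift lands in $\mathcal{F}$. You instead show that $\mathcal{G}=\{X\mid \Sigma^{-n}(X)\in\mathcal{F}\ \text{for some}\ n\}$ is a triangulated subcategory containing $\mathcal{M}$; this is an equally valid d\'{e}vissage, using only that $\mathcal{F}$ is closed under $\Sigma^{-1}$ and under extensions (Remark~\ref{rem:1}(2), or Lemma~\ref{lem:HL} as you invoke it). One correction: your parenthetical claim that $\mathcal{F}$ is closed under direct summands ``by Lemma~\ref{lem:1}(1) applied repeatedly'' is not justified --- that lemma only reorders and collapses $*$-products, and since $\mathcal{M}$ itself need not be closed under direct summands (cf.\ Lemma~\ref{lem:wis}), one should not expect $\mathcal{F}$ to be either. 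Fortunately the claim is not needed: ${\rm tri}\langle\mathcal{M}\rangle$ is by definition the smallest triangulated subcategory containing $\mathcal{M}$, with no thickness requirement, so closure of $\mathcal{G}$ under shifts and extensions already yields $\mathcal{G}\supseteq{\rm tri}\langle\mathcal{M}\rangle=\mathcal{T}$, and the rest of your argument goes through.
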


\begin{proof}
    The isomorphism $K_0^{\rm sp}(\mathcal{M})\rightarrow K_0(\mathcal{F})$ follows from Proposition~\ref{prop:F}. Recall from  Remark~\ref{rem:1}(2) that
     $$\mathcal{F}=\bigcup_{n\geq 0} \Sigma^{-n}\mathcal{M}*\cdots *\Sigma^{-1}\mathcal{M}*\mathcal{M}.$$
In particular, $\mathcal{F}$ is closed under $\Sigma^{-1}$.     Since $\mathcal{T}={\rm tri}\langle \mathcal{M}\rangle$, each object $X$ of $\mathcal{T}$ belongs to
     $$\Sigma^{i_1}\mathcal{M}* \cdots *\Sigma^{i_{n-1}}\mathcal{M}* \Sigma^{i_n}\mathcal{M}$$
     for some $i_1, \cdots, i_{n-1}, i_n\in \mathbb{Z}$.
     By Lemma~\ref{lem:1}(1), we may assume that $i_1< i_2<\cdots < i_n $. Consequently, for any sufficiently large $n$, the object $\Sigma^{-n}(X)$
    belongs to $\mathcal{F}$. So, the conditions in Lemma~\ref{lem:Sigma} are fulfilled.  Then the required isomorphism $K_0(\mathcal{F})\simeq K_0(\mathcal{T})$ follows immediately.
\end{proof}

Recall from \cite[Definition~1.1.1]{Bond} that a \emph{weight structure} on $\mathcal{T}$ is a pair $(\mathcal{U}_{\geq 0}, \mathcal{U}_{\leq 0})$ of subcategories subject to the following conditions:
\begin{enumerate}
    \item Both $\mathcal{U}_{\geq 0}$ and $\mathcal{U}_{\leq 0}$ are closed under direct summands;
    \item $\mathcal{U}_{\geq 0}$ is closed under $\Sigma^{-1}$, and $\mathcal{U}_{\leq 0}$ is closed under $\Sigma$;
    \item ${\rm Hom}_\mathcal{T}(\mathcal{U}_{\geq 0}, \Sigma \mathcal{U}_{\leq 0})=0$;
    \item $\mathcal{U}_{\geq 0}* \Sigma \mathcal{U}_{\leq 0}=\mathcal{T}$.
\end{enumerate}
The \emph{core} of the weight structure is defined to be the subcategory $\mathcal{C}=\mathcal{U}_{\geq 0}\cap \mathcal{U}_{\leq 0}$. It is a presilting subcategory of $\mathcal{T}$. We mention that a weight structure is called a co-t-structure, and the core is called the coheart in \cite[Definitions~2.4 and 2.6]{Pau}.

The weight structure $(\mathcal{U}_{\geq 0}, \mathcal{U}_{\leq 0})$ is \emph{bounded} if for each object $X$, there exist natural numbers $n\leq m$ such that $X\in \Sigma^{-n}\mathcal{U}_{\geq 0}\cap \Sigma^{-m}\mathcal{U}_{\leq 0}$. In this case, the core $\mathcal{C}$ is a silting subcategory; see \cite[Corollary~1.5.7]{Bond}. Moreover, by \cite[Proposition~2.23(b)]{AI} any silting subcategory which is closed under direct summands arises as the core of a bounded weight structure.

The following result   is  due to \cite[Theorem~5.3.1]{Bond}, which might be viewed as a version of  Theorem~\ref{thm:main}.

\begin{cor}
Let $(\mathcal{U}_{\geq 0}, \mathcal{U}_{\leq 0})$ be a bounded weight structure on $\mathcal{T}$ with core $\mathcal{C}$. Then the inclusions $\mathcal{C}\hookrightarrow \mathcal{U}_{\geq 0}\hookrightarrow \mathcal{T}$ induce isomorphisms $K_0^{\rm sp}(\mathcal{C})\simeq K_0(\mathcal{U}_{\geq 0})\simeq K_0(\mathcal{T})$ of abelian groups.
\end{cor}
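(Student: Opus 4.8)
The plan is to deduce the corollary directly from Theorem~\ref{thm:main} by identifying the core $\mathcal{C}$ with a silting subcategory $\mathcal{M}$ and identifying the intermediate subcategory $\mathcal{F}$ built from $\mathcal{C}$ with the coaisle $\mathcal{U}_{\geq 0}$. Since the core of a bounded weight structure is silting (as recalled just above, via \cite[Corollary~1.5.7]{Bond}), setting $\mathcal{M}=\mathcal{C}$ immediately gives $K_0^{\rm sp}(\mathcal{C})\simeq K_0(\mathcal{F})\simeq K_0(\mathcal{T})$. The only genuine work is to verify that $\mathcal{F}=\mathcal{U}_{\geq 0}$, after which the two isomorphisms of the corollary are exactly those of the theorem.

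First I would show $\mathcal{F}\subseteq \mathcal{U}_{\geq 0}$. By Remark~\ref{rem:1}(2), $\mathcal{F}$ is the union of the subcategories $\Sigma^{-n}\mathcal{C}*\cdots*\Sigma^{-1}\mathcal{C}*\mathcal{C}$. Since $\mathcal{C}\subseteq \mathcal{U}_{\geq 0}$ and, by axiom~(2) of a weight structure, $\mathcal{U}_{\geq 0}$ is closed under $\Sigma^{-1}$, each factor $\Sigma^{-i}\mathcal{C}$ lies in $\mathcal{U}_{\geq 0}$. As $\mathcal{U}_{\geq 0}$ is closed under extensions (which follows from axioms~(3) and~(4) in the standard way), any iterated extension of such factors again lies in $\mathcal{U}_{\geq 0}$, giving the inclusion. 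For the reverse inclusion $\mathcal{U}_{\geq 0}\subseteq \mathcal{F}$, I would take $X\in \mathcal{U}_{\geq 0}$ and use boundedness to write $X\in \Sigma^{-n}\mathcal{U}_{\geq 0}\cap \Sigma^{-m}\mathcal{U}_{\leq 0}$ for suitable $n\leq m$; the key point is that the weight decompositions repeatedly peel off shifted copies of the core, exhibiting $X$ in some $\Sigma^{-k}\mathcal{C}*\cdots*\mathcal{C}$, i.e.\ in $\mathcal{F}$.

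The main obstacle I expect is the careful bookkeeping in this second inclusion: extracting a genuine $\Sigma^{\leq 0}(\mathcal{C})$-filtration (with factors shifted only by nonpositive powers of $\Sigma$) from the abstract boundedness condition requires iterating the weight decomposition $\mathcal{U}_{\geq 0}*\Sigma\mathcal{U}_{\leq 0}=\mathcal{T}$ and tracking how the bounds $n,m$ shrink at each stage so that the process terminates. This is precisely the content of the filtration theory of weight structures in \cite{Bond}, so I would either invoke the existence of weight Postnikov towers from there or reprove the needed instance by induction on $m-n$, using axiom~(3) to ensure the connecting maps land correctly and Lemma~\ref{lem:1} to normalize the shifts into decreasing order. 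Once $\mathcal{F}=\mathcal{U}_{\geq 0}$ is established, no further argument is needed: the inclusions $\mathcal{C}\hookrightarrow\mathcal{U}_{\geq 0}\hookrightarrow\mathcal{T}$ coincide with $\mathcal{M}\hookrightarrow\mathcal{F}\hookrightarrow\mathcal{T}$, and Theorem~\ref{thm:main} supplies the desired isomorphisms.
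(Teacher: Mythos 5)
Your proposal is correct and follows essentially the same route as the paper: identify the core with a silting subcategory, show that $\mathcal{F}=\mathcal{U}_{\geq 0}$, and invoke Theorem~\ref{thm:main}. The only difference is that the paper disposes of the identification $\mathcal{F}=\mathcal{U}_{\geq 0}$ by citing \cite[Proposition~2.23(b)]{AI}, whereas you sketch the two inclusions directly (deferring the harder one to the weight Postnikov tower machinery of \cite{Bond}), which amounts to the same thing.
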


\begin{proof}
    As mentioned above, the core $\mathcal{C}$ is a silting subcategory of $\mathcal{T}$. Moreover, by \cite[Proposition~2.23(b)]{AI} an object has a $\Sigma^{\leq 0}(\mathcal{C})$-filtration if and only if it belongs to $\mathcal{U}_{\geq 0}$. Then we deduce these isomorphisms by Theorem~\ref{thm:main}.
\end{proof}

\begin{rem}
(1) By applying the  corollary above to the opposite category of $\mathcal{T}$, one might deduce isomorphisms $K_0^{\rm sp}(\mathcal{C})\simeq K_0(\mathcal{U}_{\leq 0})\simeq K_0(\mathcal{T})$ of abelian groups.

 (2) The corollary above is analogous to the following well-known result; see \cite[Proposition~A.9.5]{Achar}. Let $\mathcal{T}$ have  a bounded t-structure $(\mathcal{D}^{\leq 0}, \mathcal{D}^{\geq 0})$ with heart $\mathcal{A}=\mathcal{D}^{\leq 0}\cap \mathcal{D}^{\geq 0}$. Then the inclusions $\mathcal{A}\hookrightarrow \mathcal{D}^{\leq 0}\hookrightarrow \mathcal{T}$ induce isomorphisms $K_0(\mathcal{A})\simeq K_0(\mathcal{D}^{\leq 0})\simeq K_0(\mathcal{T})$ of abelian groups.

 (3) We mention that the isomorphism $K_0^{\rm sp}(\mathcal{C})\simeq K_0(\mathcal{T})$ above is extended to isomorphisms between the corresponding higher $K$-groups in \cite{So}. One expects that the higher $K$-groups of $\mathcal{U}_{\geq 0}$ are also isomorphic to them.
\end{rem}

Let $\mathcal{A}$ be a skeletally small additive category. Denote its bounded  homotopy category by $\mathbf{K}^b(\mathcal{A})$. We identify any object in $\mathcal{A}$ with the corresponding stalk complex concentrated in degree zero. Therefore, $\mathcal{A}$ is viewed as a full subcategory of $\mathbf{K}^b(\mathcal{A})$.  Moreover, it is  a silting subcategory.

Recall that an idempotent $e\colon A\rightarrow A$ in $\mathcal{A}$ splits if there are morphisms $r\colon A\rightarrow Y$ and $s\colon Y\rightarrow A$ satisfying $e=s\circ r$ and ${\rm Id}_Y=r\circ s$. The  category $\mathcal{A}$ is said to be \emph{weakly idempotent-split},  if any idempotent $e\colon X\rightarrow X$ splits whenever ${\rm Id}_X-e$ splits.

The following result is due to \cite[Theorem~4.1 and Corollary~4.3(1)]{BV}.

\begin{lem}\label{lem:wis}
The subcategory $\mathcal{A}\subseteq \mathbf{K}^b(\mathcal{A})$ is closed under direct summands if and only if $\mathcal{A}$ is weakly idempotent-split.
\end{lem}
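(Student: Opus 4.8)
The plan is to characterize exactly which complexes become direct summands of stalk complexes, and to show this forces the relevant idempotent-splitting condition. The key structural fact I would establish first is the following: if $A \in \mathcal{A}$ is a stalk complex and $A \simeq B \oplus C$ in $\mathbf{K}^b(\mathcal{A})$, then both $B$ and $C$ are homotopy equivalent to stalk complexes. The natural approach is to produce, from the direct sum decomposition, a pair of idempotents in $\mathbf{K}^b(\mathcal{A})$ on the object $A$ that are homotopic to the image of endomorphisms of $A$ \emph{in} $\mathcal{A}$. Since $A$ is concentrated in degree zero, $\mathrm{End}_{\mathbf{K}^b(\mathcal{A})}(A) = \mathrm{End}_{\mathcal{A}}(A)$, so the splitting idempotent $e$ associated to the summand $B$ lives already in $\mathcal{A}$. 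Thus the obstacle to $B$ being (equivalent to) a genuine summand of $A$ in $\mathcal{A}$ is precisely whether $e$ splits in $\mathcal{A}$.

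First I would set up the correspondence between direct summands of an object $X$ in an additive (idempotent-\emph{in}complete) category and idempotent endomorphisms of $X$ that admit a splitting, recording that $X \simeq B \oplus C$ corresponds to a pair of orthogonal idempotents $e, \mathrm{Id}_X - e$ \emph{both} of which split, with $B = \mathrm{im}(e)$ and $C = \mathrm{im}(\mathrm{Id}_X - e)$. This is where the \emph{weak} idempotent-split hypothesis enters: it is exactly the condition that $e$ splits whenever its complement $\mathrm{Id}_X - e$ does. So for the ``if'' direction I would argue: suppose $\mathcal{A}$ is weakly idempotent-split and $A \in \mathcal{A}$ decomposes as $B \oplus C$ in $\mathbf{K}^b(\mathcal{A})$. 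The decomposition gives an idempotent $e \in \mathrm{End}_{\mathbf{K}^b(\mathcal{A})}(A) = \mathrm{End}_{\mathcal{A}}(A)$ whose complement also splits in $\mathbf{K}^b(\mathcal{A})$; one then checks that both $e$ and $\mathrm{Id}_A - e$, viewed back in $\mathcal{A}$, split there (the complement splitting in $\mathbf{K}^b$ must be shown to descend to a splitting in $\mathcal{A}$, which is the genuinely technical point), so by weak idempotent-splitness $e$ splits in $\mathcal{A}$, yielding $B \in \mathcal{A}$ up to isomorphism.

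For the converse, I would assume $\mathcal{A} \subseteq \mathbf{K}^b(\mathcal{A})$ is closed under direct summands and deduce weak idempotent-splitness directly. Take $e \colon X \to X$ in $\mathcal{A}$ with $\mathrm{Id}_X - e$ splitting in $\mathcal{A}$, say through an object $Y \in \mathcal{A}$. The splitting of $\mathrm{Id}_X - e$ exhibits $Y$ as a summand of $X$ in $\mathcal{A}$, and correspondingly $e$ becomes an idempotent on $X$ in $\mathbf{K}^b(\mathcal{A})$; since $\mathbf{K}^b(\mathcal{A})$ is idempotent-complete (it is the bounded homotopy category, which is Karoubian by a standard mapping-telescope or known closure argument), the idempotent $e$ splits in $\mathbf{K}^b(\mathcal{A})$, giving a summand $Z$ of $X$ with $X \simeq Y \oplus Z$ there. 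By the closure hypothesis, $Z$ lies in $\mathcal{A}$, and then $e$ splits in $\mathcal{A}$ through $Z$.

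I expect the main obstacle to be the passage between splittings in $\mathbf{K}^b(\mathcal{A})$ and splittings in $\mathcal{A}$ — specifically, turning a homotopy-theoretic splitting of an idempotent on a stalk complex into an honest splitting in $\mathcal{A}$, and controlling the shape of summand complexes of a stalk complex. The cited result \cite[Theorem~4.1 and Corollary~4.3(1)]{BV} presumably supplies precisely the needed dictionary between idempotent-completion properties of $\mathcal{A}$ and summand-closure of $\mathcal{A}$ inside $\mathbf{K}^b(\mathcal{A})$; so in practice I would lean on that characterization to bridge the two worlds rather than reconstructing the homotopy computation by hand, and the identification $\mathrm{End}_{\mathbf{K}^b(\mathcal{A})}(A) = \mathrm{End}_{\mathcal{A}}(A)$ for stalk complexes is the clean fact that keeps the idempotents living in $\mathcal{A}$ throughout.
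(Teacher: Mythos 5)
There are two genuine problems with your proposal, one in each direction. For the direction ``$\mathcal{A}$ weakly idempotent-split $\Rightarrow$ closed under summands'', your idempotent argument is circular. Given $A\simeq B\oplus C$ in $\mathbf{K}^b(\mathcal{A})$ you correctly observe that the associated idempotent $e$ and its complement $\mathrm{Id}_A-e$ both live in $\mathrm{End}_{\mathcal{A}}(A)$, but to invoke weak idempotent-splitness you must first show that $\mathrm{Id}_A-e$ splits \emph{in} $\mathcal{A}$. Its splitting in $\mathbf{K}^b(\mathcal{A})$ passes through the complex $C$, which is not known to be homotopy equivalent to a stalk complex --- that is exactly the statement being proved, and $e$ and $\mathrm{Id}_A-e$ are in perfectly symmetric positions, so there is no way to get started. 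The actual content here is that a retract in $\mathbf{K}^b(\mathcal{A})$ of a stalk complex is homotopy equivalent to a stalk complex when $\mathcal{A}$ is weakly idempotent-split; this is proved by an inductive argument stripping contractible summands off the top and bottom degrees of the retract (using that split epimorphisms in a weakly idempotent-split category have kernels), and it is precisely \cite[Theorem~4.1]{BV}. You do say at the end that you would ``lean on'' that citation, which is what the paper does (it identifies $\mathcal{A}$ with the core of the standard weight structure on $\mathbf{K}^b(\mathcal{A})$, and cores are closed under summands by definition), but your own proposed route does not constitute an alternative proof and cannot be repaired as stated.

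For the converse, your structure matches the paper's, but the justification you give is false: $\mathbf{K}^b(\mathcal{A})$ is \emph{not} idempotent-complete in general (e.g.\ $\mathbf{K}^b$ of finitely generated free modules over a Dedekind domain with nontrivial class group is a proper subcategory of its Karoubi envelope $\mathbf{K}^b(\mathrm{proj})$), and the mapping-telescope argument needs countable coproducts, which $\mathbf{K}^b(\mathcal{A})$ lacks. What is true, and is all you need, is that \emph{every} triangulated category is weakly idempotent-split (\cite[Lemma~2.2]{LC}, which the paper cites): since $\mathrm{Id}_X-e$ splits in $\mathcal{A}$ and hence in $\mathbf{K}^b(\mathcal{A})$, the specific idempotent $e$ splits there by completing the split epimorphism $r\colon X\to Y$ to a triangle whose connecting map vanishes. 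With that substitution, your argument for this direction --- the splitting object is a summand of $X$, hence lies in $\mathcal{A}$ by hypothesis, and fullness puts the splitting maps back in $\mathcal{A}$ --- is exactly the paper's.
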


\begin{proof}
By \cite[Lemma~2.2]{LC}, any triangulated category is weakly idempotent-split. Consequently, any full subcategory of a triangulated category is weakly idempotent-split, provided that it is closed under direct summands. Then we have the ``only if" part.

For the ``if" part,  we observe by  \cite[Theorem~4.1]{BV} that $\mathcal{A}$ is identified with the core of the standard weight structure on $\mathbf{K}^b(\mathcal{A})$. In particular, it is closed under direct summands in $\mathbf{K}^b(\mathcal{A})$.
\end{proof}

\section{A  cluster-tilting analogue of Theorem~A}

In this section, we obtain a cluster-tilting analogue of Theorem~A; see Corollary~\ref{cor:cluster-tilting}. The main result is Theorem~\ref{thm:Fd}, which is  a cluster-tilting analogue of Proposition~\ref{prop:F}.

Throughout this section, we fix   $d\in \{2, 3, \cdots\}$, and $\mathcal{T}$ a skeletally small triangulated category. Following \cite[Section~3]{IYo}, we say that a full additive subcategory  $\mathcal{M}$ of $\mathcal{T}$ is \emph{$d$-rigid} if ${\rm Hom}_\mathcal{T}(\mathcal{M},\Sigma^i\mathcal{M})=0$ for each $1\leq i<d$. We mention that a presilting subcategory is always $d$-rigid.

We fix a $d$-rigid subcategory $\mathcal{M}$. The following result is analogous to Lemma~\ref{lem:1} with the same proof.

\begin{lem}\label{lem:ct1}
		The following statements hold.
		\begin{enumerate}
			\item $\Sigma^{i}\mathcal{M}*\Sigma^j\mathcal{M}\subseteq \Sigma^{j}\mathcal{M}*\Sigma^i\mathcal{M}$ for $i+1-d<  j< i$, and $\Sigma^{i}\mathcal{M}*\Sigma^i\mathcal{M}=\Sigma^i\mathcal{M}$.
			\item ${\rm Hom}_\mathcal{T}(\Sigma^{-n}\mathcal{M}*\Sigma^{-(n-1)}\mathcal{M}*\cdots *\Sigma^{-1}\mathcal{M}, \Sigma^m\mathcal{M})=0$  if $0\leq m<d-1$ and $1\leq n< d-m$. \hfill $\square$
		\end{enumerate}
\end{lem}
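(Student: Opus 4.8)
The plan is to mirror the proof of Lemma~\ref{lem:1} exactly, the only difference being that the vanishing hypotheses now come from $d$-rigidity rather than from presilting, so I must keep careful track of the index ranges in which those vanishings are available.

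For part (1), I would consider the exact triangle
$$\Sigma^{i}(M_1)\longrightarrow E\longrightarrow \Sigma^{j}(M_2)\stackrel{a}\longrightarrow \Sigma^{i+1}(M_1)$$
with $M_1, M_2\in\mathcal{M}$, and aim to show $a=0$. The morphism $a$ lives in ${\rm Hom}_\mathcal{T}(\Sigma^{j}(M_2),\Sigma^{i+1}(M_1))={\rm Hom}_\mathcal{T}(M_2,\Sigma^{i+1-j}(M_1))$. Setting $k=i+1-j$, the constraint $i+1-d<j<i$ translates precisely into $1\le k<d$ (the upper bound $j<i$ gives $k\ge 2$, actually $k\ge 1$ after including the rotated case, and the lower bound $i+1-d<j$ gives $k<d$). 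In that range $d$-rigidity yields ${\rm Hom}_\mathcal{T}(\mathcal{M},\Sigma^{k}\mathcal{M})=0$, so $a=0$, whence $E\simeq\Sigma^{i}(M_1)\oplus\Sigma^{j}(M_2)\in\Sigma^{j}\mathcal{M}*\Sigma^{i}\mathcal{M}$. The equality $\Sigma^i\mathcal{M}*\Sigma^i\mathcal{M}=\Sigma^i\mathcal{M}$ is immediate, since the connecting map there lies in ${\rm Hom}_\mathcal{T}(\mathcal{M},\Sigma\mathcal{M})=0$ as $1\le 1<d$.

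For part (2), I would fix $m$ with $0\le m<d-1$ and introduce the subcategory
$$\mathcal{S}_m=\{E\in\mathcal{T}\mid {\rm Hom}_\mathcal{T}(E,\Sigma^m\mathcal{M})=0\},$$
which is closed under extensions by the long exact sequence. It suffices to check that $\Sigma^{-n}\mathcal{M}\subseteq\mathcal{S}_m$ for each $n$ in the stated range $1\le n<d-m$, and then conclude by the extension-closedness that the whole iterated extension $\Sigma^{-n}\mathcal{M}*\cdots*\Sigma^{-1}\mathcal{M}$ lands in $\mathcal{S}_m$. The containment $\Sigma^{-n}\mathcal{M}\subseteq\mathcal{S}_m$ amounts to ${\rm Hom}_\mathcal{T}(\Sigma^{-n}\mathcal{M},\Sigma^m\mathcal{M})={\rm Hom}_\mathcal{T}(\mathcal{M},\Sigma^{m+n}\mathcal{M})=0$, and here $1\le m+n$ together with $n<d-m$, i.e. $m+n<d$, places $m+n$ exactly in the rigidity window $1\le m+n<d$, so the vanishing holds.

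The main obstacle is purely bookkeeping rather than conceptual: I must verify that the two-sided index constraints in the statement are exactly the translations of the single-sided rigidity window $1\le k<d$ under the reindexings $k=i+1-j$ in part~(1) and $k=m+n$ in part~(2). Once the arithmetic of these ranges is checked, the structural arguments are identical to those of Lemma~\ref{lem:1}, which is why the statement can legitimately be asserted with ``the same proof.''
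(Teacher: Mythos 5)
Your proposal is correct and follows exactly the route the paper intends: the paper proves Lemma~\ref{lem:ct1} by declaring it has ``the same proof'' as Lemma~\ref{lem:1}, namely killing the connecting morphism via the vanishing window (here $1\le k<d$ from $d$-rigidity in place of $k\ge 1$ from presilting) for part (1), and using the extension-closed subcategory $\mathcal{S}_m$ for part (2). Your index bookkeeping ($k=i+1-j\in[2,d)$ and $m+n\in[1,d)$) is precisely the verification required, so there is nothing to add.
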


For each $1\leq m\leq d$, we  consider the full subcategory $\mathcal{F}_m$ of $\mathcal{T}$ formed by those objects, which  admit a $\Sigma^{\leq 0}(\mathcal{M})$-filtration of length $n$ with $n\leq m$. Therefore, we have
$$\mathcal{M}=\mathcal{F}_1\subseteq \mathcal{F}_2\subseteq \cdots \subseteq \mathcal{F}_d.$$
By Lemma~\ref{lem:ct1}(1), for each $1\leq m<d$, the subcategory $\mathcal{F}_m$ is   closed under extensions; compare Remark~\ref{rem:1}(2). However, $\mathcal{F}_d$ is not closed under extensions in general; compare Lemma~\ref{lem:fd-ext} below.

For any object $X$ in $\mathcal{T}$, we denote by ${\rm add}\; X$ the full subcategory formed by direct summands of finite direct sums of $X$.

\begin{exm}
 {\rm Let $\mathbb{K}$ be a field. Let $A$ be the finite dimensional $\mathbb{K}$-algebra given by the following quiver
 \[\xymatrix{1 \ar[r]^\alpha & 2 \ar[r]^-\beta &3}\]
 subject to the relation $\beta\alpha=0$. Each vertex $i$ corresponds to a simple module $S_i$ and an indecomposable projective module $P_i$. Denote by $\mathbf{D}^b(A\mbox{-mod})$ the bounded derived category of finite dimensional left $A$-modules. Set $\mathcal{M}={\rm add}\; (S_1\oplus S_3)$. Then it is a $2$-rigid subcategory of $\mathbf{D}^b(A\mbox{-mod})$. We have
 $$\mathcal{F}_2=\Sigma^{-1}\mathcal{M}\ast \mathcal{M}={\rm add}\; (\Sigma^{-1}(S_1\oplus S_3)\oplus (S_1\oplus S_3)).$$
 Consider the two-term complex
 $$X=\cdots \longrightarrow 0\longrightarrow P_2\longrightarrow P_1\longrightarrow 0\longrightarrow \cdots,$$
 where $P_1$ has degree $1$ and the unnamed arrow $P_2\rightarrow P_1$ is induced by multiplying $\alpha$ from the right. We have an exact triangle
 $$S_3\longrightarrow X\longrightarrow \Sigma^{-1}(S_1)\longrightarrow \Sigma(S_3).$$
Therefore,  $X$ belongs to $\mathcal{M}\ast \Sigma^{-1}\mathcal{M}\subseteq \mathcal{F}_2\ast \mathcal{F}_2$, but $X$ does not belong to $\mathcal{F}_2$. Consequently, $\mathcal{F}_2$ is not closed under extensions. }
\end{exm}

The following fact is well known.

\begin{lem}\label{lem:fd-ext}
Let $\mathcal{M}$ be a $d$-rigid subcategory. Then $\mathcal{F}_d$ is closed under extensions if and only if $\mathcal{M}\ast \Sigma^{-(d-1)}\mathcal{M}\subseteq \mathcal{F}_d$.
\end{lem}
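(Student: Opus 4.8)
The plan is to prove the equivalence by showing that the nontrivial implication follows from building arbitrary extensions out of the single generating case $\mathcal{M}*\Sigma^{-(d-1)}\mathcal{M}$. The forward implication is immediate: if $\mathcal{F}_d$ is closed under extensions, then since $\mathcal{M}=\mathcal{F}_1\subseteq \mathcal{F}_d$ and $\Sigma^{-(d-1)}\mathcal{M}\subseteq \mathcal{F}_d$ (the latter being a length-$d$ filtration whose only nonzero factor sits in degree $d-1$), any object of $\mathcal{M}*\Sigma^{-(d-1)}\mathcal{M}$ is an extension of an object of $\mathcal{F}_d$ by an object of $\mathcal{F}_d$ and hence lies in $\mathcal{F}_d$.

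For the converse, I would assume $\mathcal{M}*\Sigma^{-(d-1)}\mathcal{M}\subseteq \mathcal{F}_d$ and prove $\mathcal{F}_d*\mathcal{F}_d\subseteq \mathcal{F}_d$. The strategy is to reduce a general extension to the generating case using the horseshoe-type construction of Lemma~\ref{lem:HL} together with the reordering from Lemma~\ref{lem:ct1}(1). First I would observe that $\mathcal{F}_d=\bigcup_{n\leq d}\Sigma^{-(n-1)}\mathcal{M}*\cdots *\Sigma^{-1}\mathcal{M}*\mathcal{M}$, so it suffices to show that any $E$ fitting into a triangle $X\to E\to Z\to \Sigma(X)$ with $X,Z\in \mathcal{F}_d$ again admits a $\Sigma^{\leq 0}(\mathcal{M})$-filtration of length at most $d$. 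The idea is to run the horseshoe argument of Lemma~\ref{lem:HL} degree by degree: at each stage the factor of $E$ in degree $i$ should be $M_i^X\oplus M_i^Z$, but the obstruction is that the commuting square needed to invoke the $3\times 3$ Lemma need no longer vanish in the $d$-rigid setting, since the relevant Hom-group ${\rm Hom}_\mathcal{T}(\Sigma^{-i}\mathcal{M},\Sigma^{-j+1}\mathcal{M})$ is only forced to be zero when the degree gap stays below $d$.

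The hard part will be controlling what happens when the combined filtration of $E$ would naively have length exceeding $d$, or when the horseshoe square fails to commute. The key technical point is that the only genuinely new extension problem is the one between a top factor of $X$ (in degree $0$) and a bottom factor of $Z$ that has been pushed down to degree $d-1$ through the reordering of Lemma~\ref{lem:ct1}(1); every other pairing of factors spans a degree gap strictly less than $d$, where $d$-rigidity kills the connecting map and the horseshoe lemma applies verbatim. Thus I would isolate, using the associativity of $*$ and repeated application of Lemma~\ref{lem:ct1}(1) to reorder suspensions, the single troublesome subextension lying in $\mathcal{M}*\Sigma^{-(d-1)}\mathcal{M}$, and invoke the hypothesis to replace it by an object of $\mathcal{F}_d$.

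Once that troublesome piece is absorbed into $\mathcal{F}_d$ by hypothesis, the remaining factors assemble by the horseshoe construction into a filtration of total length at most $d$, giving $E\in\mathcal{F}_d$. I expect the main obstacle to be the bookkeeping: making precise how the reordering isomorphisms interact with the factors produced by Lemma~\ref{lem:HL}, and verifying that after extracting the degree-$0$-versus-degree-$(d-1)$ interaction, all surviving connecting morphisms live in Hom-groups that Lemma~\ref{lem:ct1}(2) forces to vanish. This is essentially a careful induction on the filtration lengths of $X$ and $Z$, where the base case is exactly the hypothesis $\mathcal{M}*\Sigma^{-(d-1)}\mathcal{M}\subseteq\mathcal{F}_d$ and the inductive step peels off one factor at a time while staying within the $d$-rigid vanishing range.
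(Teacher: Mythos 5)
Your proposal is correct and, once the horseshoe-lemma scaffolding is stripped away, lands on essentially the same argument as the paper: write $\mathcal{F}_d\ast\mathcal{F}_d$ as the $*$-product $\Sigma^{-(d-1)}\mathcal{M}\ast\cdots\ast\mathcal{M}\ast\Sigma^{-(d-1)}\mathcal{M}\ast\cdots\ast\mathcal{M}$, apply the hypothesis to the single troublesome adjacent pair $\mathcal{M}\ast\Sigma^{-(d-1)}\mathcal{M}$ in the middle, and absorb the remaining factors via the reordering of Lemma~\ref{lem:ct1}(1). The paper's proof never invokes Lemma~\ref{lem:HL} or tracks factors and connecting morphisms at all---it is a purely formal computation with the associative operation $\ast$, using the inclusions $\Sigma^{-i}\mathcal{M}\ast\mathcal{F}_d\subseteq\mathcal{F}_d$ ($1\leq i\leq d-1$) and $\mathcal{F}_d\ast\Sigma^{-j}\mathcal{M}\subseteq\mathcal{F}_d$ ($0\leq j\leq d-2$)---so the degree-by-degree horseshoe bookkeeping you anticipate as the ``hard part'' can be dispensed with entirely.
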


\begin{proof}
It suffices to prove the ``if" part. Since $\mathcal{F}_d= \Sigma^{-(d-1)}\mathcal{M}\ast \cdots \ast \Sigma^{-1}\mathcal{M}\ast\mathcal{M}$, by applying Lemma~\ref{lem:ct1}(1) repeatedly, we have
\begin{align}\label{equ:ct1}
\Sigma^{-i}\mathcal{M}\ast \mathcal{F}_d\subseteq \mathcal{F}_d \mbox{ and } \mathcal{F}_d\ast \Sigma^{-j}\mathcal{M}\subseteq \mathcal{F}_d
\end{align}
for any $1\leq i\leq d-1$ and $0\leq j\leq d-2$. Then we have the following inclusions.
\begin{align*}
\mathcal{F}_d\ast \mathcal{F}_d &= \Sigma^{-(d-1)}\mathcal{M}\ast \cdots \ast \Sigma^{-1}\mathcal{M}\ast\mathcal{M} \ast \Sigma^{-(d-1)}\mathcal{M}\ast \cdots \ast \Sigma^{-1}\mathcal{M}\ast\mathcal{M}\\
&\subseteq \Sigma^{-(d-1)}\mathcal{M}\ast \cdots \ast \Sigma^{-1}\mathcal{M}\ast \mathcal{F}_d\ast  \Sigma^{-(d-2)}\mathcal{M}\ast\cdots \ast \Sigma^{-1}\mathcal{M}\ast \mathcal{M}\\
&\subseteq \mathcal{F}_d.
\end{align*}
Here, the first inclusion uses the hypothesis, and  the last one follows by applying (\ref{equ:ct1}) repeatedly.
\end{proof}

We emphasize that the condition $Z\in \mathcal{F}_{d-1}$ in Proposition~\ref{prop:filt-ct}(2) below is crucial.

\begin{prop}\label{prop:filt-ct}
   \begin{enumerate}
   \item Any two $\Sigma^{\leq 0}(\mathcal{M})$-filtrations of an object $X$ with length at most $d$  are equivalent.
       \item Let  $X\rightarrow Y \rightarrow Z \rightarrow \Sigma(X)$ be an exact triangle.   Assume that $1\leq n\leq d $, and that
     $$0=X_n\longrightarrow X_{n-1}\longrightarrow \cdots \longrightarrow X_1\longrightarrow X_0=X
$$
and
$$0=Z_n\longrightarrow Z_{n-1}\longrightarrow \cdots \longrightarrow Z_1\longrightarrow Z_0=Z
$$
are $\Sigma^{\leq 0}(\mathcal{M})$-filtrations of $X$ and $Z$, respectively. If $Z$ belongs to $\mathcal{F}_{d-1}$, then $Y$ has a $\Sigma^{\leq 0}(\mathcal{M})$-filtration
$$0=Y_n\longrightarrow Y_{n-1}\longrightarrow \cdots \longrightarrow Y_1\longrightarrow Y_0=Y
$$
with its factors $M_i^Y\simeq M_i^X\oplus M_i^Z$ for $0\leq i\leq n-1$.
       \end{enumerate}
\end{prop}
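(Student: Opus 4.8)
The plan is to treat the two parts by adapting, respectively, the octahedral argument of Proposition~\ref{prop:filt} and the horseshoe argument of Lemma~\ref{lem:HL}, the only genuinely new feature being a careful bookkeeping of filtration lengths against the rigidity bound $d$. The governing fact throughout will be Lemma~\ref{lem:ct1}(2), together with its immediate consequence that an object lying in $\mathcal{F}_k$ with $k\leq d-1$ satisfies ${\rm Hom}_\mathcal{T}(-,\Sigma\mathcal{M})=0$ (the degree-$0$ factor forces the bound $d-1$ via the extension-closed-subcategory argument), whereas for $k=d$ this vanishing may fail. This asymmetry is precisely why the hypothesis $Z\in\mathcal{F}_{d-1}$ will be indispensable in part~(2).

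For part~(1) I would first extend the shorter filtration by zeros so that both have a common length $n\leq d$, and then induct on $n$, the case $n=1$ being trivial since both top factors are isomorphic to $X$. For $n\geq 2$ I would run the (TR4) argument of Proposition~\ref{prop:filt} verbatim, applying it to $Y_1\to X\to M_0^Y\to\Sigma(Y_1)$ and $X\to M_0^X\to\Sigma(X_1)\to\Sigma(X)$ to obtain the same commutative diagram, in which the two comparison maps $a\colon Y_1\to M_0^X$ and $b\colon X_1\to M_0^Y$ must be shown to vanish. Here the tails $X_1,Y_1$ lie in $\Sigma^{-(n-1)}\mathcal{M}\ast\cdots\ast\Sigma^{-1}\mathcal{M}$, so ${\rm Hom}_\mathcal{T}(X_1,\mathcal{M})=0={\rm Hom}_\mathcal{T}(Y_1,\mathcal{M})$ by Lemma~\ref{lem:ct1}(2) with $m=0$ and $k=n-1<d$; this is exactly where the length bound $n\leq d$ enters. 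The vanishing yields $\Sigma(X_1)\oplus M_0^Y\simeq Z\simeq\Sigma(Y_1)\oplus M_0^X$, and splicing the filtration of $X_1$ (resp. $Y_1$) into these isomorphisms produces two length-$(n-1)$ filtrations of $Z$; the induction hypothesis equates their alternating sums, and the same rearrangement as in Proposition~\ref{prop:filt} finishes part~(1).

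For part~(2) I would again induct on $n$, following Lemma~\ref{lem:HL}. The base case $n=1$ is the splitting of $M_0^X\to Y\to M_0^Z\xrightarrow{c}\Sigma(M_0^X)$, valid because $c\in{\rm Hom}_\mathcal{T}(\mathcal{M},\Sigma\mathcal{M})=0$ by $d$-rigidity. For the inductive step I would perform a single application of the $3\times 3$ lemma \cite[Proposition~1.1.11]{BBD} at degree $0$: the square with bottom edge $M_0^Z\xrightarrow{0}\Sigma(M_0^X)$ commutes precisely because ${\rm Hom}_\mathcal{T}(Z,\Sigma\mathcal{M})=0$, and this is the one and only place the hypothesis $Z\in\mathcal{F}_{d-1}$ is consumed. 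The lemma then delivers $M_0^Y\simeq M_0^X\oplus M_0^Z$ together with an exact triangle $X_1\to Y_1\to Z_1\to\Sigma(X_1)$ whose outer terms carry the truncated filtrations of length $n-1$.

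The hard part — and the point I expect to require the most care — is how to feed this residual triangle back into the induction, since $Z_1$ is itself \emph{not} in $\mathcal{F}_{d-1}$: its induced filtration is shifted, so only $\Sigma(Z_1)$ acquires a genuine $\Sigma^{\leq 0}(\mathcal{M})$-filtration. The remedy I plan to use is to apply $\Sigma$ to the residual triangle, obtaining $\Sigma(X_1)\to\Sigma(Y_1)\to\Sigma(Z_1)\to\Sigma^2(X_1)$, in which $\Sigma(X_1)$ and $\Sigma(Z_1)$ have standard filtrations of length $n-1$ with factors $M_1^X,\dots,M_{n-1}^X$ and $M_1^Z,\dots,M_{n-1}^Z$. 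Crucially $\Sigma(Z_1)\in\mathcal{F}_{n-1}\subseteq\mathcal{F}_{d-1}$ because $n-1\leq d-1$, so the hypothesis of part~(2) is restored (indeed strengthened), the induction hypothesis applies and yields a filtration of $\Sigma(Y_1)$ with factors $M_i^X\oplus M_i^Z$; de-shifting by $\Sigma^{-1}$ and prepending the degree-$0$ triangle $Y_1\to Y\to M_0^X\oplus M_0^Z\to\Sigma(Y_1)$ assembles the desired length-$n$ filtration of $Y$. Equivalently, one may recurse directly as in Lemma~\ref{lem:HL}, checking at each degree $i$ that ${\rm Hom}_\mathcal{T}(Z_i,\Sigma^{1-i}\mathcal{M})=0$; this holds automatically for $i\geq 1$ whenever $n\leq d$, confirming that $Z\in\mathcal{F}_{d-1}$ is needed only to launch the recursion at $i=0$.
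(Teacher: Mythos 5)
Your proposal is correct and follows essentially the same route as the paper, which simply observes that the proof of Proposition~\ref{prop:filt} carries over verbatim for part~(1) and that the hypothesis $Z\in\mathcal{F}_{d-1}$ supplies the vanishing ${\rm Hom}_{\mathcal{T}}(Z,\Sigma\mathcal{M})=0$ needed for the first square of Lemma~\ref{lem:HL}, after which the recursion proceeds as before. Your extra bookkeeping---that the vanishings at levels $i\geq 1$ are automatic from $n\leq d$ via Lemma~\ref{lem:ct1}(2), so the hypothesis on $Z$ is consumed only at level $0$---is exactly the point the paper leaves implicit.
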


\begin{proof}
The same proof of Proposition~\ref{prop:filt} yields (1). For (2), since $Z$ belongs to $\mathcal{F}_{d-1}$, by Lemma~\ref{lem:ct1}(2) we do have ${\rm Hom}_\mathcal{T}(Z, \Sigma\mathcal{M})$=0. Then the first square in the proof of Lemma~\ref{lem:HL} trivially commutes. The remaining argument there carries through, and yields the required filtration for $Y$.
\end{proof}

In what follows, we obtain two cluster-tilting analogues of  Proposition~\ref{prop:F}. The following proposition is similar to \cite[Proposition~4.8]{OS}.

\begin{prop}\label{prop:d-ct}
Let $\mathcal{M}$ be a $d$-rigid subcategory of $\mathcal{T}$. Then for each $1\leq m <d$,  the inclusion $\mathcal{M}\hookrightarrow \mathcal{F}_m$ induces an isomorphism $K_0^{\rm sp}(\mathcal{M})\simeq K_0(\mathcal{F}_m)$ of abelian groups.
\end{prop}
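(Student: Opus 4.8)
The plan is to transcribe the proof of Proposition~\ref{prop:F} almost verbatim, with $\mathcal{F}$ replaced by $\mathcal{F}_m$, checking at each step that every invocation stays inside the range where the $d$-rigid analogues are valid. For each $X\in \mathcal{F}_m$ I would choose a $\Sigma^{\leq 0}(\mathcal{M})$-filtration of length $n\leq m$ with factors $M_i^X$, and set $\gamma(X)=\sum_{i=0}^{n-1}(-1)^i\langle M_i^X\rangle$ in $K_0^{\rm sp}(\mathcal{M})$. Since any such filtration has length $n\leq m<d$, Proposition~\ref{prop:filt-ct}(1) applies and shows that $\gamma(X)$ does not depend on the chosen filtration.

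The next step is to prove that $X\mapsto \gamma(X)$ is compatible with the exact triangles that define the relations of $K_0(\mathcal{F}_m)$. Given an exact triangle $X\rightarrow Y\rightarrow Z\rightarrow \Sigma(X)$ with $X, Y, Z\in \mathcal{F}_m$, I would pick filtrations of $X$ and $Z$ of a common length $n\leq m$ (padding by zeros if necessary). The crucial point is that $Z\in \mathcal{F}_m\subseteq \mathcal{F}_{d-1}$, so the hypothesis of Proposition~\ref{prop:filt-ct}(2) is met; that proposition then yields a filtration of $Y$ whose factors are $M_i^X\oplus M_i^Z$. Evaluating $\gamma(Y)$ on this filtration gives $\gamma(Y)=\gamma(X)+\gamma(Z)$, so $\gamma$ descends to a homomorphism $\Gamma\colon K_0(\mathcal{F}_m)\rightarrow K_0^{\rm sp}(\mathcal{M})$ with $\Gamma([X])=\gamma(X)$.

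Finally I would check that $\Gamma$ is inverse to the induced homomorphism $\iota\colon K_0^{\rm sp}(\mathcal{M})\rightarrow K_0(\mathcal{F}_m)$, $\langle M\rangle\mapsto [M]$. One direction is immediate: a length-one filtration gives $\gamma(M)=\langle M\rangle$, so $\Gamma\circ\iota=\mathrm{id}$. For the other direction I would telescope the filtration of a given $X\in\mathcal{F}_m$. Two auxiliary facts are needed. First, each $\Sigma^{-i}(M_i^X)$ lies in $\mathcal{F}_m$, carrying the length-$(i+1)$ filtration whose factors all vanish except the top one, with $i+1\leq m$; since $\mathcal{F}_m$ is closed under extensions for $m<d$ by Lemma~\ref{lem:ct1}(1), it follows that each $X_i$ lies in $\mathcal{F}_m$ as well. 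Second, the trivial triangles $\Sigma^{-i}(M)\rightarrow 0\rightarrow \Sigma^{-(i-1)}(M)\rightarrow \Sigma^{-(i-1)}(M)$, whose three terms lie in $\mathcal{F}_m$ whenever $1\leq i\leq m-1$, give $[\Sigma^{-i}(M)]=(-1)^i[M]$ by induction on $i$. Telescoping the filtration triangles $X_{i+1}\rightarrow X_i\rightarrow \Sigma^{-i}(M_i^X)\rightarrow \Sigma(X_{i+1})$ and using $X_n=0$ then yields $[X]=\sum_{i=0}^{n-1}(-1)^i[M_i^X]=\iota(\gamma(X))$, so $\iota\circ\Gamma=\mathrm{id}$.

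I expect the only genuinely delicate point to be the bookkeeping in the compatibility step: the whole argument rests on the inclusion $\mathcal{F}_m\subseteq \mathcal{F}_{d-1}$, which holds precisely because $m<d$ and is exactly what breaks down for $m=d$, where $\mathcal{F}_d$ need not be closed under extensions (cf.\ the Example together with Lemma~\ref{lem:fd-ext}). Everything else is a routine adaptation of the proof of Proposition~\ref{prop:F}.
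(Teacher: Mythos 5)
Your proposal is correct and takes essentially the same route as the paper's own proof: the same $\gamma$, well-definedness via Proposition~\ref{prop:filt-ct}(1), compatibility with triangles via Proposition~\ref{prop:filt-ct}(2) using precisely the inclusion $\mathcal{F}_m\subseteq\mathcal{F}_{d-1}$, and then checking that $\Gamma$ inverts the induced map. The only difference is that you spell out the final verification (the signs $[\Sigma^{-i}(M)]=(-1)^i[M]$ and the telescoping of the filtration triangles), which the paper dismisses as routine; your details are accurate.
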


\begin{proof}
For each $X\in \mathcal{F}_m$, we choose a $\Sigma^{\leq 0}(\mathcal{M})$-filtration
$$0=X_n\longrightarrow X_{n-1}\longrightarrow \cdots \longrightarrow X_1\longrightarrow X_0=X$$
with $n\leq m$ and factors $M_i^X$.
We define an element
\begin{align}\label{equ:gamma}
\gamma(X)=\sum_{i=0}^{n-1}(-1)^i\langle M_i^X\rangle
\end{align}
in $K_0^{\rm sp}(\mathcal{M})$. By Proposition~\ref{prop:filt-ct}(1), the element $\gamma(X)$ does not depend on the choice of such a filtration; compare \cite[Remark~5.2]{Fed}. This statement holds also for the case $m=d$.

By Proposition~\ref{prop:filt-ct}(2), the map $(X\mapsto \gamma(X))$ is compatible with exact triangles in $\mathcal{F}_m$ for $m<d$. Therefore, such a map induces a well-defined homomorphism $\Gamma\colon K_0(\mathcal{F}_m)\rightarrow K_0^{\rm sp}(\mathcal{M})$ such that $\Gamma([X])=\gamma(X)$. It is routine to verify that $\Gamma$ is inverse to the induced homomorphism $K_0^{\rm sp}(\mathcal{M})\rightarrow K_0(\mathcal{F}_m)$.
\end{proof}

The following remark shows that the condition $m<d$ above is necessary.

\begin{rem}\label{rem:rel}
The induced map $K_0^{\rm sp}(\mathcal{M})\rightarrow  K_0(\mathcal{F}_d)$ is surjective, but not injective in general. We define the \emph{relative Grothendieck group} $K_0^{\rm rel}(\mathcal{F}_d)$ to be the abelian group generated by the set $\{\{C\}\; | \; C\in \mathcal{F}_d\}$ subject to the relations $\{C\}-(\{C_1\}+\{C_2\})$ whenever there is an exact triangle $C_1\rightarrow C\rightarrow C_2\rightarrow \Sigma(C_1)$ in $\mathcal{T}$ with $C_1, C\in \mathcal{F}_d$ and $C_2\in \mathcal{F}_{d-1}$. Then the same argument above yields an isomorphism
$$K_0^{\rm sp}(\mathcal{M})\stackrel{\sim}\longrightarrow K_0^{\rm rel}(\mathcal{F}_d),$$
whose inverse  sends $\{C\}$ to $\gamma(C)$.
\end{rem}

The following immediate consequence of Proposition~\ref{prop:d-ct} somehow complements Proposition~\ref{prop:F}.

\begin{cor}
Let $\mathcal{M}$ be a presilting subcategory of $\mathcal{T}$. Then for any $m\geq 1$,   the inclusion $\mathcal{M}\hookrightarrow \mathcal{F}_m$ induces an isomorphism $K_0^{\rm sp}(\mathcal{M})\simeq K_0(\mathcal{F}_m)$ of abelian groups.
\end{cor}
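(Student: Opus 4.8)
The plan is to deduce this as a direct corollary of Proposition~\ref{prop:d-ct}, trading on the fact that a presilting subcategory is far more rigid than any single $d$-rigid subcategory: since ${\rm Hom}_\mathcal{T}(\mathcal{M},\Sigma^i\mathcal{M})=0$ holds for \emph{every} $i\geq 1$, the subcategory $\mathcal{M}$ is $d$-rigid for every $d\in\{2,3,\cdots\}$, and not merely for one prescribed value.

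The observation that makes the reduction work is that the subcategory $\mathcal{F}_m$ is defined purely through the existence of a $\Sigma^{\leq 0}(\mathcal{M})$-filtration of length at most $m$; this notion refers only to $\mathcal{M}$ and to $m$, and is entirely \emph{independent} of the auxiliary integer $d$ fixed at the beginning of the section. Hence, once $m\geq 1$ is given, I am free to choose $d$ as large as convenient. I would take $d=m+1$, so that $d\geq 2$ and $1\leq m<d$. With this choice $\mathcal{M}$ is $d$-rigid, and therefore all hypotheses of Proposition~\ref{prop:d-ct} are satisfied. Applying that proposition yields the asserted isomorphism $K_0^{\rm sp}(\mathcal{M})\simeq K_0(\mathcal{F}_m)$ induced by the inclusion.

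I do not expect any genuine obstacle here, since the entire argument---the well-definedness of $\gamma$ via Proposition~\ref{prop:filt-ct}(1), the compatibility with exact triangles via Proposition~\ref{prop:filt-ct}(2), and the verification that $\Gamma$ inverts the induced map---is already carried out inside Proposition~\ref{prop:d-ct} and does not depend on which admissible $d$ is chosen. The only point to articulate is the quantifier passage from ``for each $m<d$ with $d$ fixed'' to ``for any $m$'', which is immediate precisely because presilting supplies $d$-rigidity for arbitrarily large $d$; in effect one lets $d\to\infty$.
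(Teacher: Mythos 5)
Your proposal is correct and is essentially identical to the paper's own proof: the authors likewise observe that a presilting subcategory is $d$-rigid for every $d\geq 2$ and then invoke Proposition~\ref{prop:d-ct}. Your explicit choice $d=m+1$ and the remark that $\mathcal{F}_m$ does not depend on $d$ simply make the quantifier bookkeeping more visible.
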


\begin{proof}
As mentioned before, any presilting subcategory is $d$-rigid for any $d\geq 2$. Then the required result follows from Proposition~\ref{prop:d-ct} immediately.
\end{proof}

Assume that $\mathcal{M}$ is $d$-rigid such that $\mathcal{F}_d$ is closed under extensions. Denote by $N$ the subgroup of $K_0^{\rm sp}(\mathcal{M})$ generated by these elements
$$\gamma(E)-\langle M_1\rangle-(-1)^{d-1}\langle M_2\rangle $$
 for all exact triangles
 \begin{align}\label{tri:E}
 M_1\rightarrow E\rightarrow \Sigma^{-(d-1)}(M_2)\rightarrow \Sigma(M_1)
  \end{align}
  with $M_1, M_2\in \mathcal{M}$. Here, we observe by the assumption above that $E$ belongs to $\mathcal{F}_d$, and refer to (\ref{equ:gamma}) for the definition of $\gamma(E)$. We consider the quotient group $K_0^{\rm sp}(\mathcal{M})/N$, whose typical element is denoted by $\overline{\langle M\rangle}$.

\begin{thm}\label{thm:Fd}
Let $\mathcal{M}$ be a $d$-rigid subcategory of $\mathcal{T}$. Assume that $\mathcal{F}_d$ is closed under extensions. Then the inclusion $\mathcal{M}\hookrightarrow \mathcal{F}_d$ induces an isomorphism $K_0^{\rm sp}(\mathcal{M})/N \simeq K_0(\mathcal{F}_d)$ of abelian groups.
\end{thm}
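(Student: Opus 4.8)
The plan is to exhibit mutually inverse homomorphisms between $K_0^{\rm sp}(\mathcal{M})/N$ and $K_0(\mathcal{F}_d)$. In one direction the inclusion $\mathcal{M}\hookrightarrow\mathcal{F}_d$ induces $\pi\colon K_0^{\rm sp}(\mathcal{M})\to K_0(\mathcal{F}_d)$, $\langle M\rangle\mapsto[M]$, and I must check $N\subseteq\ker\pi$ so that $\pi$ descends to $\bar\pi\colon K_0^{\rm sp}(\mathcal{M})/N\to K_0(\mathcal{F}_d)$. In the other direction I want $\bar\Gamma\colon K_0(\mathcal{F}_d)\to K_0^{\rm sp}(\mathcal{M})/N$, $[X]\mapsto\overline{\gamma(X)}$, with $\gamma$ as in (\ref{equ:gamma}); by Proposition~\ref{prop:filt-ct}(1) the element $\gamma(X)$ is well defined and iso-invariant for every $X\in\mathcal{F}_d$, so $\bar\Gamma$ is well defined on generators. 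The whole proof then rests on two facts: the identity $\pi(\gamma(X))=[X]$ in $K_0(\mathcal{F}_d)$ for every $X\in\mathcal{F}_d$, and the congruence $\gamma(C)\equiv\gamma(C_1)+\gamma(C_2)\pmod N$ for every exact triangle $C_1\to C\to C_2\to\Sigma(C_1)$ with all terms in $\mathcal{F}_d$. The first handles $\bar\pi\circ\bar\Gamma=\mathrm{id}$, surjectivity, and (together with (\ref{tri:E})) the inclusion $N\subseteq\ker\pi$; the second makes $\bar\Gamma$ well defined. Since $\gamma(M)=\langle M\rangle$, the composite $\bar\Gamma\circ\bar\pi$ is then visibly the identity.

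First I would record the shift relation $[\Sigma^{-i}(M)]=(-1)^i[M]$ in $K_0(\mathcal{F}_d)$ for $0\le i\le d-1$, obtained by iterating the trivial triangles $\Sigma^{-(k+1)}(M)\to 0\to\Sigma^{-k}(M)\to\Sigma^{-k}(M)$, all of whose terms lie in $\mathcal{F}_d$ for $0\le k\le d-2$. For the identity $\pi(\gamma(X))=[X]$ I would induct on the filtration length $n$ of $X$. Using associativity of $*$ I split off the top layer, obtaining an exact triangle $\Sigma^{-(n-1)}(M_{n-1})\to X\to B\to\Sigma^{-(n-2)}(M_{n-1})$ with $B\in\mathcal{F}_{n-1}$ and all three terms in $\mathcal{F}_d$; concatenating the chosen filtration of $B$ with this top layer is a $\Sigma^{\leq 0}(\mathcal{M})$-filtration of $X$, whence $\gamma(X)=(-1)^{n-1}\langle M_{n-1}\rangle+\gamma(B)$. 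Applying $\pi$, the triangle relation in $K_0(\mathcal{F}_d)$ together with the shift relation and the induction hypothesis for $B$ gives $[X]=(-1)^{n-1}[M_{n-1}]+[B]=\pi(\gamma(X))$. Evaluating this on the object $E$ of a triangle (\ref{tri:E})—which lies in $\mathcal{F}_d$ because $\mathcal{F}_d$ is closed under extensions—and comparing with $[E]=[M_1]+(-1)^{d-1}[M_2]$ shows that each generator of $N$ lies in $\ker\pi$.

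The hard part is the congruence that makes $\bar\Gamma$ well defined; this is exactly where the quotient by $N$ is forced. Given a triangle $C_1\to C\to C_2\to\Sigma(C_1)$ in $\mathcal{F}_d$, I peel the top layer of $C_2$, writing $\Sigma^{-(d-1)}(M)\to C_2\to B_2\to\Sigma^{-(d-2)}(M)$ with $B_2\in\mathcal{F}_{d-1}$ (when $C_2\in\mathcal{F}_{d-1}$ the claim is already Proposition~\ref{prop:filt-ct}(2)). Applying (TR4) to the composite $C\to C_2\to B_2$ produces an object $K\in\mathcal{F}_d$ fitting into two triangles: $K\to C\to B_2\to\Sigma(K)$, whose third term lies in $\mathcal{F}_{d-1}$, and $C_1\to K\to\Sigma^{-(d-1)}(M)\to\Sigma(C_1)$. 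The first yields $\gamma(C)=\gamma(K)+\gamma(B_2)$ by Proposition~\ref{prop:filt-ct}(2). The second has exactly the shape of the defining triangles (\ref{tri:E}) of $N$, except that $C_1\in\mathcal{F}_d$ rather than $C_1\in\mathcal{M}$. I would therefore establish the generalized relation
\[
\gamma(K)\equiv\gamma(C_1)+(-1)^{d-1}\langle M\rangle\pmod N
\]
for all such triangles, by induction on the filtration length $\ell$ of $C_1$. The base case $C_1\in\mathcal{M}$ is a generator of $N$; for the inductive step I peel the top layer $\Sigma^{-(\ell-1)}(M')$ of $C_1$ and apply (TR4) a second time to the resulting composite, splitting the computation of $\gamma(K)$ into one triangle handled by the induction hypothesis (third term $\Sigma^{-(d-1)}(M)$ over an $\mathcal{F}_{\ell-1}$-object) and one whose third term $\Sigma^{-(\ell-2)}(M')$ lies in $\mathcal{F}_{d-1}$, so that Proposition~\ref{prop:filt-ct}(2) applies again and, by the shift-invariance $\gamma(\Sigma A)=-\gamma(A)$, reassembles into $\gamma(C_1)$. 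Combining the two relations with $\gamma(C_2)=(-1)^{d-1}\langle M\rangle+\gamma(B_2)$ gives $\gamma(C)\equiv\gamma(C_1)+\gamma(C_2)\pmod N$.

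The principal obstacles are thus the two octahedral constructions and the accompanying bookkeeping: keeping every auxiliary object (notably $K$) inside $\mathcal{F}_d$, which uses precisely the hypothesis that $\mathcal{F}_d$ is closed under extensions, and keeping every ``correction'' third term inside $\mathcal{F}_{d-1}$, which is what permits Proposition~\ref{prop:filt-ct}(2) to be invoked. Once the two displayed facts are in place, $\bar\pi$ and $\bar\Gamma$ are mutually inverse and the isomorphism $K_0^{\rm sp}(\mathcal{M})/N\simeq K_0(\mathcal{F}_d)$ follows.
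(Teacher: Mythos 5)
Your proposal is correct and follows essentially the same route as the paper: both arguments reduce the additivity of $\gamma$ modulo $N$ on triangles in $\mathcal{F}_d$ to Proposition~\ref{prop:filt-ct}(2) and to the generator triangles (\ref{tri:E}) by octahedra, first splitting the top layer $\Sigma^{-(d-1)}(M)$ off the third term. The only organizational difference is the residual case $C_1\to K\to\Sigma^{-(d-1)}(M)\to\Sigma(C_1)$, which you settle by induction on the filtration length of $C_1$ (one octahedron per layer, each time invoking Proposition~\ref{prop:filt-ct}(2) on a cone over $\Sigma^{-(\ell-2)}(M')\in\mathcal{F}_{d-1}$), whereas the paper's Step~2 uses a single octahedron splitting off the degree-zero factor $M_0$ of $C_1$, so that the complement reappears as a suspension lying in $\mathcal{F}_{d-1}$ and one application of Proposition~\ref{prop:filt-ct}(2) finishes the case at once.
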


\begin{proof}
The inclusion $\mathcal{M}\hookrightarrow \mathcal{F}_d$ certainly induces a homomorphism
$$K_0^{\rm sp}(\mathcal{M})/N \longrightarrow K_0(\mathcal{F}_d),$$ which is surjective. To construct its inverse, it suffices to prove the following claim: for each exact triangle $X\stackrel{a} \rightarrow Y\stackrel{b}\rightarrow Z\stackrel{c}\rightarrow \Sigma(X)$ with $X, Y, Z\in \mathcal{F}_d$, we always have
$$\overline{\gamma(Y)}=\overline{\gamma(X)}+\overline{\gamma(Z)}.$$

\emph{Step 1.}\; Assume that $Z$ belongs to $\mathcal{F}_{d-1}$. Proposition~\ref{prop:filt-ct}(2) yields $\gamma(Y)=\gamma(X)+\gamma(Z)$ in $K_0^{\rm sp}(\mathcal{M})$.

\emph{Step 2.}\; Assume that $Z$ belongs to $\Sigma^{-(d-1)}\mathcal{M}$. Fix an exact triangle
$$X_1\stackrel{i}\longrightarrow X\stackrel{p}\longrightarrow M_0^X\longrightarrow \Sigma(X_1),$$
which appears in a $\Sigma^{\leq 0}(\mathcal{M})$-filtration of $X$ with length $d$. In particular, we have $M_0^X\in \mathcal{M}$ and $\Sigma(X_1)\in \mathcal{F}_{d-1}$. By the construction (\ref{equ:gamma}) of $\gamma(X)$, we have
\begin{align}\label{equ:gamma1}
\gamma(X)=\gamma(X_1)+\langle M_0^X\rangle.
\end{align}
By (TR4) and rotations, we have the following commutative diagram.
\[
\xymatrix{
X_1\ar[d]_-{i} \ar@{=}[r] & X_1\ar[d]^-{a\circ i}\\
X\ar[d]_-{p} \ar[r]^-a & Y \ar[d]\ar[r]^-{b} & Z\ar@{=}[d]\ar[r]^-{c} & \Sigma(X)\ar[d]^-{\Sigma(p)}\\
M_0^X\ar[r] \ar[d] &  E\ar[d] \ar[r] & Z\ar[d]^-{c} \ar[r] & \Sigma(M_0^X)\\
\Sigma(X_1)\ar@{=}[r] & \Sigma(X_1)\ar[r]^{\Sigma(i)} & \Sigma(X)
}\]
Here, the third row and the second column from the left are both exact triangles. Since $\mathcal{F}_d$ is closed under extensions,  the third row implies that $E$ belongs to $\mathcal{F}_d$.  Recall that $Z$ belongs to $\Sigma^{-(d-1)}\mathcal{M}$. The very definition of the subgroup $N$ yields
\begin{align}\label{equ:gamma2}
\overline{\gamma(E)}=\overline{\langle M_0^X\rangle}+\overline{\gamma(Z)}.
\end{align}
By rotating the second column, we have an exact triangle $Y\rightarrow E\rightarrow \Sigma(X_1)\rightarrow \Sigma(Y)$. Since $\Sigma(X_1)\in \mathcal{F}_{d-1}$, Step 1 yields
\begin{align}\label{equ:gamma3}
\gamma(E)=\gamma(Y)+\gamma(\Sigma(X_1))=\gamma(Y)-\gamma(X_1).
\end{align}
By combining (\ref{equ:gamma1}), (\ref{equ:gamma2}) and (\ref{equ:gamma3}), we obtain the required equality.

\emph{Step 3.}\; We now treat  the general case. Using the $\Sigma^{\leq 0}(\mathcal{M})$-filtration of $Z$ with length $d$, we obtain an exact triangle
$$\Sigma^{-(d-1)}(M_{d-1}^Z)\stackrel{j}\longrightarrow Z\stackrel{q}\longrightarrow Z'\longrightarrow \Sigma^{2-d}(M_{d-1}^Z)$$
with  $M_{d-1}^Z\in \mathcal{M}$ and $Z'\in \mathcal{F}_{d-1}$. Moreover, by the construction (\ref{equ:gamma}) of $\gamma(Z)$, we have
\begin{align}\label{equ:gamma4}
\gamma(Z)=\gamma(Z')+(-1)^{(d-1)}\langle M_{d-1}^Z\rangle.
\end{align}
By (TR4) and rotations, we have the following commutative diagram.
\[\xymatrix{
X\ar@{=}[d]\ar[r]& F\ar[d]\ar[r] & \Sigma^{-(d-1)}(M_{d-1}^Z)\ar[d]^-{j} \ar[r] & \Sigma(X)\ar@{=}[d]\\
X\ar[r]^-a & Y\ar[d] \ar[r]^-b & Z\ar[d]^-{q} \ar[r]^-c & \Sigma(X)\ar[d]\\
& Z'\ar[d]\ar@{=}[r] & Z'\ar[d]\ar[r]& \Sigma(F)\\
& \Sigma(F)  \ar[r]& \Sigma^{2-d}(M_{d-1}^Z)
}\]
The second column from the left is an exact triangle. Since $Z'$ belongs to $\mathcal{F}_{d-1}$,  Step 1 yields
\begin{align}\label{equ:gamma5}
\gamma(Y)=\gamma(F)+\gamma(Z').
\end{align}
Applying Step 2 to the first row, we have
\begin{align}\label{equ:gamma6}
\overline{\gamma(F)}=\overline{\gamma(X)}+ (-1)^{d-1} \overline{\langle M_{d-1}^Z\rangle}.
\end{align}
Combining (\ref{equ:gamma4}), (\ref{equ:gamma5}) and (\ref{equ:gamma6}), we obtain the required equality. This proves the claim, and completes the proof.
\end{proof}

\begin{rem}
(1) We mention that Theorem~\ref{thm:Fd} actually  implies Proposition~\ref{prop:d-ct}. To be more precise,  we assume that $\mathcal{M}$ is $d'$-rigid with $d'>d$. Since we have ${\rm Hom}_\mathcal{T}(\Sigma^{-(d-1)}\mathcal{M}, \Sigma\mathcal{M})=0$, by Lemma~\ref{lem:fd-ext} $\mathcal{F}_d$ is closed under extensions; moreover,  the corresponding subgroup $N$ of $K_0^{\rm sp}(\mathcal{M})$ is zero. Then the isomorphism in  Theorem~\ref{thm:Fd} yields  the required isomorphism  $K_0^{\rm sp}(\mathcal{M})\simeq K_0(\mathcal{F}_d)$ in Proposition~\ref{prop:d-ct}.

(2) Assume that $\mathcal{F}_d$ is closed under extensions. Combining the isomorphisms in Remark~\ref{rem:rel} and Theorem~\ref{thm:Fd}, we obtain the following one
$$K_0^{\rm rel}(\mathcal{F}_d)/N'\stackrel{\sim}\longrightarrow K_0(\mathcal{F}_d),\;  \{C\}+N'\mapsto [C].$$
Here, $N'$ is the subgroup of $K_0^{\rm rel}(\mathcal{F}_d)$ generated by the elements $$\{E\}-\{M_1\}-(-1)^{d-1}\{M_2\}$$
for all exact triangles $M_1\rightarrow E\rightarrow \Sigma^{-(d-1)}(M_2)\rightarrow \Sigma(M_1)$ with $M_1, M_2\in \mathcal{M}$.
\end{rem}

Following \cite[Section~3]{IYo} and  \cite[Definition~5.1]{Kvamme}, a $d$-rigid subcategory $\mathcal{M}$ of $\mathcal{T}$ is called {\em $d$-cluster-tilting} provided that $\mathcal{F}_d=\mathcal{T}$. The condition is equivalent to $\mathcal{T}=\Sigma^{-(d-1)}\mathcal{M}\ast\cdots \Sigma^{-1}\mathcal{M}\ast \mathcal{M}$, which is further equivalent to $\mathcal{T}=\mathcal{M}\ast \Sigma\mathcal{M}\ast \cdots \ast \Sigma^{d-1}\mathcal{M}$ by rotations.

By the proof of \cite[Proposition~5.3]{Kvamme}, we observe the following fact: a subcategory $\mathcal{M}$ is  $d$-cluster-tilting and closed under direct summands if and only if $\mathcal{M}$ is contravariantly finite in $\mathcal{T}$ and $\mathcal{M}=\{X\in \mathcal{T}\ | \ {\rm Hom}_{\mathcal{T}}(\mathcal{M}, \Sigma^i(X))=0, 1\leq i<d\}$, if and only if $\mathcal{M}$ is covariantly finite in $\mathcal{T}$ and $\mathcal{M}=\{Y\in \mathcal{T}\ | \ {\rm Hom}_{\mathcal{T}}(Y, \Sigma^i\mathcal{M})=0, 1\leq i<d\}$; compare \cite[Proposition~2.2.2]{Iyama}.

We mention that $2$-cluster-tilting objects play an important role in various additive categorifications \cite{BMRRT, GLS} of cluster algebras. For $d$-cluster-tilting objects in higher cluster categories, we refer to \cite{Wr, ZZ}.

We have  the following immediate consequence of  Theorem~\ref{thm:Fd}, which is a cluster-tilting analogue of Theorem~A, and is similar to \cite[Theorem~C]{Fed} and \cite[Theorem~5.22]{OS}.

\begin{cor}\label{cor:cluster-tilting}
 Let $\mathcal{M}$ be a  $d$-cluster-tilting subcategory of $\mathcal{T}$. Then the inclusion $\mathcal{M}\hookrightarrow \mathcal{T}$ induces an isomorphism $K_0^{\rm sp}(\mathcal{M})/N \simeq K_0(\mathcal{T})$ of abelian groups. \hfill $\square$
	\end{cor}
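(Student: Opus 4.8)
The plan is to read the corollary off directly from Theorem~\ref{thm:Fd} by verifying that all of its hypotheses are automatically satisfied for a $d$-cluster-tilting subcategory $\mathcal{M}$. First I would unwind the definition: saying that the $d$-rigid subcategory $\mathcal{M}$ is $d$-cluster-tilting means precisely that $\mathcal{F}_d=\mathcal{T}$. Since $\mathcal{M}$ is in particular $d$-rigid, the exact triangles $M_1\rightarrow E\rightarrow \Sigma^{-(d-1)}(M_2)\rightarrow \Sigma(M_1)$, the elements $\gamma(E)$, and hence the subgroup $N\subseteq K_0^{\rm sp}(\mathcal{M})$ are all well defined exactly as before. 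The second hypothesis of Theorem~\ref{thm:Fd}, namely that $\mathcal{F}_d$ be closed under extensions, is automatic in this case: since $\mathcal{F}_d=\mathcal{T}$ is the entire triangulated category, it is trivially closed under extensions (every exact triangle lies in it).

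With both hypotheses confirmed, Theorem~\ref{thm:Fd} supplies an isomorphism $K_0^{\rm sp}(\mathcal{M})/N\simeq K_0(\mathcal{F}_d)$ induced by the inclusion $\mathcal{M}\hookrightarrow \mathcal{F}_d$. The only remaining point is to identify $K_0(\mathcal{F}_d)$ with $K_0(\mathcal{T})$, and this is immediate from the definitions: when $\mathcal{F}_d=\mathcal{T}$, the generators $\{[C]\mid C\in\mathcal{F}_d\}$ and the triangle relations cutting out $K_0(\mathcal{F}_d)$ coincide verbatim with the generators and relations defining $K_0(\mathcal{T})$. Composing these two identifications, and noting that $\mathcal{M}\hookrightarrow\mathcal{F}_d\hookrightarrow\mathcal{T}$ is just the inclusion $\mathcal{M}\hookrightarrow\mathcal{T}$, yields the desired isomorphism $K_0^{\rm sp}(\mathcal{M})/N\simeq K_0(\mathcal{T})$. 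I do not expect any genuine obstacle here, since the entire substance of the statement is carried by Theorem~\ref{thm:Fd}; the closest thing to a point requiring care is simply checking that the condition $\mathcal{F}_d=\mathcal{T}$ delivers both the extension-closedness needed to invoke Theorem~\ref{thm:Fd} and the clean identification of the two Grothendieck groups.
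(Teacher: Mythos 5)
Your proposal is correct and matches the paper's treatment: the paper states Corollary~\ref{cor:cluster-tilting} as an immediate consequence of Theorem~\ref{thm:Fd}, precisely because $\mathcal{F}_d=\mathcal{T}$ makes the extension-closedness hypothesis automatic and identifies $K_0(\mathcal{F}_d)$ with $K_0(\mathcal{T})$ by definition. Your write-up simply spells out these two routine checks, which is exactly what the paper leaves to the reader.
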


In the following remark, we mention that Corollary~\ref{cor:cluster-tilting} recovers \cite[Theorem~C]{Fed}.

\begin{rem}
 Assume that $\mathcal{M}$ is a $d$-cluster-tilting subcategory of $\mathcal{T}$ satisfying $\Sigma^d(\mathcal{M})=\mathcal{M}$. Then it is naturally a $(d+2)$-angulated category in the sense of \cite{GKO}. We claim that any triangle of the form (\ref{tri:E}) and a $\Sigma^{\leq 0}(\mathcal{M})$-filtration of $E$ with length $d$ induce a $(d+2)$-angle; moreover, any $(d+2)$-angle arises in this way.

 We take $d=3$ for example. Assume that $M_1\stackrel{a} \rightarrow E \stackrel{b}\rightarrow \Sigma^{-2}(M_2)\stackrel{c}\rightarrow \Sigma(M_1)$ is an exact triangle with $M_1, M_2\in \mathcal{M}$. The following two exact triangles
  $$
X_1\stackrel{i_1} \rightarrow E \stackrel{p_0}\rightarrow M_0^E\stackrel{q_0}\rightarrow \Sigma(X_1) \mbox{ and } \Sigma^{-2}(M_2^E)\stackrel{i_2} \rightarrow X_1 \stackrel{p_1}\rightarrow \Sigma^{-1}(M_1^E) \stackrel{q_1} \rightarrow \Sigma^{-1}(M_2^E)
 $$
 define a  $\Sigma^{\leq 0}(\mathcal{M})$-filtration of $E$ with length $3$. Then by \cite[Theorem~1]{GKO}, we have the following induced $5$-angle in $\mathcal{M}$.
 $$\Sigma^{-3}(M_2)\xrightarrow{\Sigma^{-1}(c)}M_1 \xrightarrow{p_0\circ a}  M_0^E \xrightarrow{\Sigma(p_1)\circ q_0}  M_1^E \xrightarrow{\Sigma(q_1)} M_2^E \xrightarrow{\Sigma^2(b\circ i_1\circ i_2)}  M_2$$
Here, by the assumption  above we have that $\Sigma^{-3}(M_2)$ belongs to $\mathcal{M}$. By reversing the argument, we infer that any $5$-angle in $\mathcal{M}$ arises in this way.

  By combining the claim above and \cite[Proposition~5.4]{Fed}, we infer that the above subgroup $N$ coincides with the group ${\rm Im}\; \theta_\mathcal{M}$ defined in \cite{Fed}. Then the isomorphism in  Corollary~\ref{cor:cluster-tilting} yields the one in \cite[Theorem~C]{Fed}.
\end{rem}

The following trivial example indicates that the extension-closed condition in Theorem~\ref{thm:Fd} is somehow weaker than the one in Corollary~\ref{cor:cluster-tilting}.

\begin{exm}
{\rm Let $d\geq 2$. Let $\mathcal{T}'$ be a triangulated category with a $d$-cluster tilting subcategory $\mathcal{M}'$. Let $\mathcal{T}''$ be another triangulated category  and $\mathcal{M}''\subseteq \mathcal{T}''$ be a presilting subcategory or a $d'$-rigid subcategory with $d<d'$. Denote by  $\mathcal{F}''_d$ the subcategory  formed by objects admitting a $\Sigma^{\leq 0}(\mathcal{M}'')$-filtrations of length at most $d$; it is closed under extensions in $\mathcal{T}''$.

Set $\mathcal{T}=\mathcal{T}'\times \mathcal{T}''$ to be the product category. Then $\mathcal{M}=\mathcal{M}'\times \mathcal{M}''$ is a $d$-rigid subcategory of $\mathcal{T}$, which is not necessarily $d$-cluster-tilting. Recall that $\mathcal{F}_d$ denotes the full subcategory of $\mathcal{T}$ formed by those objects, which  admit a $\Sigma^{\leq 0}(\mathcal{M})$-filtration of length at most $d$. We have $\mathcal{F}_d=\mathcal{T}'\times \mathcal{F}''_d$, which is closed under extensions in $\mathcal{T}$.}
\end{exm}

\vskip 10pt

\noindent{\bf Acknowledgements}. \quad We are grateful to the anonymous referee for many helpful comments. X.W. thanks Mathematisches Forschungsinstitut Oberwolfach for the excellent working condition, where this work is partly done.  We thank Professor Hongxing Chen for the references \cite{Sch, So}, and thank Professor Yu Zhou and Professor Bin Zhu for \cite{OS} and helpful discussion.  This project is supported by  the National Natural Science Foundation of China (No.s 12325101, 12171207, 12131015, 12371015 and 12161141001).

\bibliography{}

\vskip 10pt

 {\footnotesize \noindent  Xiao-Wu Chen\\
 School of Mathematical Sciences, University of Science and Technology of China, Hefei 230026, Anhui, PR China\\

 \footnotesize \noindent Zhi-Wei Li, Xiaojin Zhang\\
School of Mathematics and Statistics, Jiangsu Normal University, Xuzhou 221116, Jiangsu, PR China\\

 \footnotesize \noindent Zhibing Zhao\\
 School of Mathematical Sciences, Anhui University,  Hefei 230601, Anhui, PR China}

\end{document}